\newtheorem{theorem}{Theorem}
\newtheorem{assumption}[theorem]{Assumption}
\newtheorem{algorithm}[theorem]{Algorithm}
\newtheorem{corollary}[theorem]{Corollary}
\newtheorem{lemma}[theorem]{Lemma}
\newtheorem{proposition}[theorem]{Proposition}
\newtheorem{remark}[theorem]{Remark}
\newenvironment{proof}[1][Proof]{\noindent\textbf{#1.} }{\ \rule{0.5em}{0.5em}}
\providecommand{\U}[1]{\protect\rule{.1in}{.1in}}
\providecommand{\U}[1]{\protect\rule{.1in}{.1in}}
\providecommand{\U}[1]{\protect\rule{.1in}{.1in}}
\begin{document}

\title{Multispace and Multilevel BDDC}
\author{Jan Mandel\footnotemark[1] \footnotemark[2]
\and Bed\v{r}ich Soused\'{\i}k\footnotemark[2] \footnotemark[3]
\and Clark R. Dohrmann\footnotemark[4]}
\maketitle

\begin{abstract}
BDDC method is the most advanced method from the Balancing family of iterative
substructuring methods for the solution of large systems of linear algebraic
equations arising from discretization of elliptic boundary value problems. In
the case of many substructures, solving the coarse problem exactly becomes a
bottleneck. Since the coarse problem in BDDC has the same structure as the
original problem, it is straightforward to apply the BDDC method recursively
to solve the coarse problem only approximately. In this paper, we formulate a
new family of abstract Multispace BDDC methods and give condition number
bounds from the abstract additive Schwarz preconditioning theory. The
Multilevel BDDC is then treated as a special case of the Multispace BDDC and
abstract multilevel condition number bounds are given. The abstract bounds
yield polylogarithmic condition number bounds for an arbitrary fixed number of
levels and scalar elliptic problems discretized by finite elements in two and
three spatial dimensions. Numerical experiments confirm the theory.

\medskip\noindent\textbf{AMS Subject Classification:} 65N55, 65M55, 65Y05

\noindent\textbf{Key words:} Iterative substructuring, additive Schwarz
method, balancing domain decomposition, BDD, BDDC, Multispace BDDC, Multilevel BDDC

\end{abstract}

\footnotetext[1]{Corresponding author. \texttt{jan.mandel@cudenver.edu}.}
\footnotetext[2]{Department of Mathematical Sciences, University of Colorado
Denver, Denver, CO 80217-3364, USA. Supported in part by the National Science
Foundation under grants \mbox{CNS-0325314}, \mbox{DMS-071387}, and
\mbox{CNS-0719641}.} \footnotetext[3]{Department of Mathematics, Faculty of
Civil Engineering, Czech Technical University, Th\'akurova 7, 166 29 Prague 6,
Czech Republic. Supported by the program of the Information society of the
Academy of Sciences of the Czech Republic \mbox{1ET400760509} and by the Grant
Agency of the Czech Republic \mbox{GA \v{C}R 106/08/0403}.} \footnotetext[4]%
{Structural Dynamics Research Department, Sandia National Laboratories,
Albuquerque NM 87185-0847, USA. Sandia is a multiprogram laboratory operated by Sandia
Corporation, a Lockheed Martin Company, for the United States Department
of Energy's National Nuclear Security Administration under Contract
DE-AC04-94-AL85000.}

\section{Introduction}

\label{sec:introduction}

The BDDC (Balancing Domain Decomposition by Constraints) method by
Dohrmann~\cite{Dohrmann-2003-PSC} is the most advanced method from the BDD
family introduced by Mandel~\cite{Mandel-1993-BDD}. It is a Neumann-Neumann
iterative substructuring method of Schwarz type~\cite{Dryja-1995-SMN} that
iterates on the system of primal variables reduced to the interfaces between
the substructures. The BDDC\ method is closely related to the FETI-DP method
(Finite Element Tearing and Interconnecting - Dual, Primal) by Farhat et
al.~\cite{Farhat-2001-FDP,Farhat-2000-SDP}. FETI-DP\ is a dual method that
iterates on a system for Lagrange multipliers that enforce continuity on the
interfaces, with some \textquotedblleft coarse\textquotedblright\ variables
treated as primal, and it is a further development of the FETI method by
Farhat and Roux~\cite{Farhat-1991-MFE}. Polylogarithmic condition number
estimates for BDDC were obtained in~\cite{Mandel-2003-CBD,Mandel-2005-ATP} and
a proof that the eigenvalues of BDDC and FETI-DP are actually the same except
for eigenvalue equal to one was given in Mandel et al.~\cite{Mandel-2005-ATP}.
Simpler proofs of the equality of eigenvalues were obtained by Li and
Widlund~\cite{Li-2006-FBB}, and also by Brenner and
Sung~\cite{Brenner-2007-BFW}, who also gave an example when BDDC\ has an
eigenvalue equal to one but FETI-DP\ does not. In the case of many
substructures, solving the coarse problem exactly becomes a bottleneck.
However, since the coarse problem in BDDC has the same form as the original
problem, the BDDC method can be applied recursively to solve the coarse
problem only approximately. This leads to a multilevel form of BDDC in a
straightforward manner, see Dohrmann~\cite{Dohrmann-2003-PSC}. Polylogarithmic
condition number bounds for three-level BDDC (BDDC with two coarse levels)
were proved in two and three spatial dimensions by
Tu~\cite{Tu-2007-TBT,Tu-2007-TBT3D}.

In this paper, we present a new abstract Multispace BDDC\ method. The method
extends a simple variational setting of BDDC from Mandel and
Soused\'{\i}k~\cite{Mandel-2007-ASF}, which could be understood as an abstract
version of BDDC\ by partial subassembly in Li and Widlund~\cite{Li-2007-UIS}.
However, we do not adopt their change of variables, which does not seem to be
suitable in our abstract setting. We provide a condition number estimate for
the abstract Multispace BDDC method, which generalizes the estimate for a
single space from~\cite{Mandel-2007-ASF}. The proof is based on the abstract
additive Schwarz theory by Dryja and Widlund~\cite{Dryja-1995-SMN}. Many BDDC
formulations (with an explicit treatment of substructure interiors, after
reduction to substructure interfaces, with two levels, and multilevel) are
then viewed as abstract Multispace BDDC with a suitable choice of spaces and
operators, and abstract condition number estimates for those BDDC methods
follow. This result in turn gives a polylogarithmic condition number bound for
Multilevel BDDC applied to a second-order scalar elliptic model problems, with
an arbitrary number of levels. A brief presentation of the main results of the
paper, without proofs and with a simplified formulation of the Multispace BDDC
estimate, is contained in the conference paper~\cite{Mandel-2007-OMB}.

The paper is organized as follows. In Sec.~\ref{sec:problem-setting} we
introduce the abstract problem setting. In Sec.~\ref{sec:abstract-multispace}
we formulate an abstract Multispace BDDC as an additive Schwarz
preconditioner. In Sec.~\ref{sec:FE-setting} we introduce the settings of a
model problem using finite element discretization. In Sec.~\ref{sec:one-level}
we recall the algorithm of the original (two-level) BDDC method and formulate
it as a Multispace BDDC. In Sec.~\ref{sec:multilevel-bddc} we generalize the
algorithm to obtain Multilevel BDDC and we also give an abstract condition
number bound. In Sec.~\ref{sec:multilevel-condition} we derive the condition
number bound for the model problem. Finally, in
Sec.~\ref{sec:numerical-examples}, we report on numerical results.

\section{Abstract Problem Setting}

\label{sec:problem-setting}

We wish to solve an abstract linear problem
\begin{equation}
u\in X:a(u,v)=\left\langle f_{X},v\right\rangle ,\quad\forall v\in X,
\label{eq:problem}%
\end{equation}
where $X$\ is a finite dimensional linear space, $a\left(  \cdot,\cdot\right)
$ is a symmetric positive definite bilinear form defined on $X$, $f_{X}\in
X^{\prime}$ is the right-hand side with $X^{\prime}$ denoting the dual space
of $X$, and $\left\langle \cdot,\cdot\right\rangle $ is the duality pairing.
The form $a\left(  \cdot,\cdot\right)  $ is also called the energy inner
product, the value of the quadratic form $a\left(  u,u\right)  $ is called the
energy of $u$, and the norm $\left\Vert u\right\Vert _{a}=a\left(  u,u\right)
^{1/2}$ is called the energy norm. The operator $A_{X}:X\mapsto X^{\prime}$
associated with $a$ is defined by
\[
a(u,v)=\left\langle A_{X}u,v\right\rangle ,\quad\forall u,v\in X.
\]

A preconditioner is a mapping $B:X^{\prime}\rightarrow X$ and we will look for
preconditioners such that $\left\langle r,Br\right\rangle $ is also symmetric
and positive definite on $X^{\prime}$. It is well known that then
$BA_{X}:X\rightarrow X$ has only real positive eigenvalues, and convergence of
the preconditioned conjugate gradients method is bounded using the condition
number%
\[
\kappa=\frac{\lambda_{\max}(BA_{X})}{\lambda_{\min}(BA_{X})},
\]
which we wish to bound above.

All abstract spaces in this paper are finite dimensional linear spaces and we
make no distinction between a linear operator and its matrix.

\section{Abstract Multispace BDDC}

\label{sec:abstract-multispace}

To introduce abstract Multispace BDDC\ preconditioner, suppose that the
bilinear form $a$ is defined and symmetric positive semidefinite on some
larger space $W\supset X$. The preconditioner is derived from the abstract
additive Schwarz theory, however we decompose some space between $X$ and $W$
rather than $X$ as it would be done in the additive Schwarz method: In the
design of the preconditioner, we choose spaces $V_{k}$, $k=1,\ldots,M$, such
that
\begin{equation}
X\subset\sum_{k=1}^{M}V_{k}\subset W. \label{eq:multispace-BDDC-spaces}%
\end{equation}

\begin{assumption}
\label{assum:pos-def}The form $a\left(  \cdot,\cdot\right)  $ is positive
definite on each $V_{k}$ separately.
\end{assumption}

\begin{algorithm}
[Abstract Multispace BDDC]\label{alg:multispace-bddc} Given spaces $V_{k}$ and
linear operators $Q_{k}$, $k=1,\ldots,M$ such that $a\left(  \cdot
,\cdot\right)  $ is positive definite on each space $V_{k}$, and%
\[
X\subset\sum_{k=1}^{M}V_{k},\quad Q_{k}:V_{k}\rightarrow X,
\]
define the preconditioner $B:r\in X^{\prime}\longmapsto u\in X$ by%
\begin{equation}
B:r\mapsto u=\sum_{k=1}^{M}Q_{k}v_{k},\quad v_{k}\in V_{k}:\quad a\left(
v_{k},z_{k}\right)  =\left\langle r,Q_{k}z_{k}\right\rangle ,\quad\forall
z_{k}\in V_{k}. \label{def:abs-mult-BDDC}%
\end{equation}

\end{algorithm}

We formulate the condition number bound first in the full strength allowed by
the proof. The bound used in the rest of this paper will be a corollary.

\begin{theorem}
\label{thm:BDDC-M}Define for all $k=1,\ldots,M$ the spaces $V_{k}%
^{\mathcal{M}}$ by%
\[
V_{k}^{\mathcal{M}}=\left\{  v_{k}\in V_{k}:\forall z_{k}\in V_{k}:Q_{k}%
v_{k}=Q_{k}z_{k}\Longrightarrow\left\Vert v_{k}\right\Vert _{a}^{2}%
\leq\left\Vert z_{k}\right\Vert _{a}^{2}\right\}  .
\]
If there exist constants $C_{0},$ $\omega,$ and a~symmetric matrix
$\mathcal{E}=(e_{ij})_{i,j=1}^{M}$, such that%
\begin{align}
&  \forall u\in X\quad\exists v_{k}\in V_{k},\text{ }k=1,\ldots,M:u=\sum
_{k=1}^{M}Q_{k}v_{k},\text{ }\sum_{k=1}^{M}\left\Vert v_{k}\right\Vert
_{a}^{2}\leq C_{0}\left\Vert u\right\Vert _{a}^{2}\label{eq:upper-Wi}\\
&  \forall k=1,\ldots,M\quad\forall v_{k}\in V_{k}^{\mathcal{M}}:\left\Vert
Q_{k}v_{k}\right\Vert _{a}^{2}\leq\omega\left\Vert v_{k}\right\Vert _{a}%
^{2}\label{eq:lower-Wi}\\
&  \forall z_{k}\in Q_{k}V_{k},\text{ }k=1,\ldots,M:a\left(  z_{i}%
,z_{j}\right)  \leq e_{ij}\left\Vert z_{i}\right\Vert _{a}\left\Vert
z_{j}\right\Vert _{a},\quad\label{eq:cauchy-W}%
\end{align}
then the preconditioner from Algorithm~\ref{alg:multispace-bddc} satisfies
\[
\kappa=\frac{\lambda_{\max}(BA_{X})}{\lambda_{\min}(BA_{X})}\leq C_{0}%
\omega\rho(\mathcal{E}).
\]

\end{theorem}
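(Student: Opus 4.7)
The approach is to mimic the classical additive Schwarz theory (Dryja--Widlund) while handling two twists: the operators $Q_k$ need not be subspace inclusions, and they may have nontrivial kernels, which is why the local stability~(\ref{eq:lower-Wi}) is only postulated on the minimum-norm subspace $V_k^{\mathcal{M}}$. I would first rewrite Algorithm~\ref{alg:multispace-bddc} in operator form: let $A_k:V_k\to V_k^{\prime}$ be the operator associated with $a\left(\cdot,\cdot\right)$ restricted to $V_k$ (well-defined and invertible by Assumption~\ref{assum:pos-def}), and $Q_k^{T}:X^{\prime}\to V_k^{\prime}$ the adjoint of $Q_k$. Then $B=\sum_k Q_k A_k^{-1} Q_k^{T}$, and setting $t_k(u)=A_k^{-1}Q_k^{T}A_X u \in V_k$ we have $BA_X u=\sum_k Q_k t_k(u)$ with $t_k(u)$ characterized by
\[
a(t_k(u),z_k)=a(u,Q_k z_k),\qquad \forall z_k\in V_k.
\]

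The key preliminary observation, and the main obstacle, is that $t_k(u)\in V_k^{\mathcal{M}}$. Indeed, for any $z_k\in\ker Q_k$ the defining relation gives $a(t_k(u),z_k)=0$, so $t_k(u)$ is $a$-orthogonal to $\ker Q_k$ and is therefore the unique element of minimum $a$-norm in its coset $t_k(u)+\ker Q_k=\{z_k\in V_k:Q_k z_k=Q_k t_k(u)\}$. This is precisely what is required for applying~(\ref{eq:lower-Wi}) to $t_k(u)$, and it is the point where the peculiar definition of $V_k^{\mathcal{M}}$ is needed.

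For the upper bound, I would evaluate $a(u,BA_X u)$ two ways. Taking $z_k=t_k(u)$ in the defining relation yields
\[
a(u,BA_X u)=\sum_k a(u,Q_k t_k(u))=\sum_k a(t_k(u),t_k(u))=\sum_k \|t_k(u)\|_a^{2}.
\]
On the other hand, expanding $\|BA_X u\|_a^{2}=\sum_{i,j}a(Q_i t_i(u),Q_j t_j(u))$ and applying the strengthened Cauchy--Schwarz~(\ref{eq:cauchy-W}) together with the spectral radius bound gives $\|BA_X u\|_a^{2}\le \rho(\mathcal{E})\sum_k \|Q_k t_k(u)\|_a^{2}$, and then~(\ref{eq:lower-Wi}) (applicable by the previous paragraph) gives $\|BA_X u\|_a^{2}\le \omega\rho(\mathcal{E})\sum_k\|t_k(u)\|_a^{2}=\omega\rho(\mathcal{E})\,a(u,BA_X u)$. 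Combining with $a(u,BA_X u)\le \|u\|_a\|BA_X u\|_a$ yields $\lambda_{\max}(BA_X)\le \omega\rho(\mathcal{E})$.

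For the lower bound, I would use the stable decomposition~(\ref{eq:upper-Wi}): given $u\in X$, pick $v_k\in V_k$ with $u=\sum_k Q_k v_k$ and $\sum_k\|v_k\|_a^{2}\le C_0 \|u\|_a^{2}$. Then
\[
\|u\|_a^{2}=\sum_k a(u,Q_k v_k)=\sum_k a(t_k(u),v_k)\le \Bigl(\sum_k \|t_k(u)\|_a^{2}\Bigr)^{1/2}\Bigl(\sum_k \|v_k\|_a^{2}\Bigr)^{1/2},
\]
and substituting $\sum_k \|t_k(u)\|_a^{2}=a(u,BA_X u)$ and the stable-decomposition bound yields $\|u\|_a^{2}\le C_0\,a(u,BA_X u)$, i.e.\ $\lambda_{\min}(BA_X)\ge 1/C_0$. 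Multiplying the two bounds gives the claimed condition number estimate.
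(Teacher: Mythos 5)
Your proof is correct, and it takes a genuinely different route from the paper. The paper does not prove the spectral bounds directly: it recasts Algorithm~\ref{alg:multispace-bddc} as an abstract additive Schwarz method on the subspaces $X_{k}=Q_{k}V_{k}$ of $X$, equips each $X_{k}$ with an auxiliary bilinear form $b_{k}$ built from minimum-energy lifting operators $G_{k}:X\rightarrow V_{k}^{\mathcal{M}}$, verifies that the hypotheses (\ref{eq:upper-Wi})--(\ref{eq:cauchy-W}) transfer to the standard three assumptions of the Dryja--Widlund theorem, and then invokes that theorem as a black box. You instead inline the whole argument: you work with the local solution operators $t_{k}=A_{k}^{-1}Q_{k}^{T}A_{X}$ and prove the two eigenvalue bounds from scratch (the Lions-type lower bound from the stable decomposition, and the upper bound via strengthened Cauchy--Schwarz plus local stability). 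The crucial technical point is the same in both proofs --- the local variational solve automatically lands in $V_{k}^{\mathcal{M}}$, which is exactly why (\ref{eq:lower-Wi}) need only be assumed there --- but you make it explicit and elementary ($a(t_{k}(u),z_{k})=a(u,Q_{k}z_{k})=0$ for $z_{k}\in\ker Q_{k}$, hence $t_{k}(u)$ is the minimum-norm element of its coset), whereas in the paper it is buried in the construction of $G_{k}$ and $b_{k}$. What your approach buys is self-containedness and the avoidance of the somewhat delicate well-definedness issues surrounding the forms $b_{k}$; what the paper's approach buys is a direct connection to the established Schwarz framework and reuse of a standard cited result.
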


\begin{proof}
We interpret the Multispace BDDC preconditioner as an abstract additive
Schwarz method. An abstract additive Schwarz method is specified by a
decomposition of the space $X$ into subspaces,%
\begin{equation}
X=X_{1}+...+X_{M}, \label{eq:decomposition-schwarz}%
\end{equation}
and by symmetric positive definite bilinear forms $b_{i}$ on $X_{i}$. The
preconditioner is a linear operator
\[
B:X^{\prime}\rightarrow X,\qquad B:r\mapsto u,
\]
defined by solving the following variational problems on the subspaces and
adding the results,%
\begin{equation}
B:r\mapsto u=\sum\limits_{k=1}^{M}u_{k},\quad u_{k}\in X_{k}:\quad b_{k}%
(u_{k},y_{k})=\left\langle r,y_{k}\right\rangle ,\quad\forall y_{k}\in X_{k}.
\label{eq:def-uM}%
\end{equation}

Dryja and Widlund \cite{Dryja-1995-SMN} proved that if there exist constants
$C_{0},$ $\omega,$ and a~symmetric matrix $\mathcal{E}=(e_{ij})_{i,j=1}^{M}$,
such that%
\begin{align}
\forall u  &  \in X\text{ }\exists u_{k}\in X_{k},\text{ }k=1,\ldots
,M:u=\sum_{k=1}^{M}u_{k},\text{ }\sum_{k=1}^{M}\left\Vert u_{k}\right\Vert
_{b_{k}}^{2}\leq C_{0}\left\Vert u\right\Vert _{a}^{2}\label{eq:upper-bi}\\
\forall k  &  =1,\ldots,M\text{ }\forall u_{k}\in X_{k}:\left\Vert
u_{k}\right\Vert _{a}^{2}\leq\omega\left\Vert u_{k}\right\Vert _{b_{k}}%
^{2}\label{eq:lower-bi}\\
\forall u_{k}  &  \in X_{k},\text{ }k=1,\ldots,M:a(u_{i},u_{j})\leq
e_{ij}\left\Vert u_{i}\right\Vert _{a}\left\Vert u_{j}\right\Vert _{a}
\label{eq:str-cauchy}%
\end{align}
then
\[
\kappa=\frac{\lambda_{\max}(BA_{X})}{\lambda_{\min}(BA_{X})}\leq C_{0}%
\omega\rho(\mathcal{E}),
\]
where $\rho$ is the spectral radius.

Now the idea of the proof is essentially to map the assumptions of the
abstract additive Schwarz estimate from the decomposition
(\ref{eq:decomposition-schwarz}) of the space $X$ to the decomposition
(\ref{eq:multispace-BDDC-spaces}). Define the spaces
\[
X_{k}=Q_{k}V_{k}.
\]
We will show that the preconditioner (\ref{def:abs-mult-BDDC}) satisfies
(\ref{eq:def-uM}), where $b_{k}$\ is defined by%
\begin{equation}
b_{k}(u_{k},y_{k})=a\left(  G_{k}x,G_{k}z\right)  ,\quad x,z\in X,\quad
u_{k}=Q_{k}G_{k}x,\quad y_{k}=Q_{k}G_{k}z. \label{eq:def-bk}%
\end{equation}
with the operators $G_{k}:X\rightarrow V_{k}^{\mathcal{M}}$ defined by%
\begin{equation}
G_{k}:u\mapsto v_{k},\quad\frac{1}{2}a\left(  v_{k},v_{k}\right)
\rightarrow\min,\text{ s.t. }v_{k}\in V_{k}^{\mathcal{M}},\text{ }u=\sum
_{k=1}^{M}Q_{k}v_{k}, \label{eq:op-G-minim}%
\end{equation}
First, from the definition of operators $G_{k}$, spaces $X_{k},$ and because
$a$ is positive definite on $V_{k}$ by Assumption (\ref{assum:pos-def}), it
follows that $G_{k}x$ and $G_{k}z$ in (\ref{eq:def-bk}) exist and are unique,
so $b_{k}$ is defined correctly. To prove (\ref{eq:def-uM}), let $v_{k}$\ be
as in (\ref{def:abs-mult-BDDC}) and note that $v_{k}$\ is the solution of
\[
\frac{1}{2}a\left(  v_{k},v_{k}\right)  -\left\langle r,Q_{k}v_{k}%
\right\rangle \rightarrow\min,\quad v_{k}\in V_{k}.
\]
Consequently, the preconditioner (\ref{def:abs-mult-BDDC}) is an abstract
additive Schwarz method and we only need to verify the inequalities
(\ref{eq:upper-bi})--(\ref{eq:str-cauchy}). To prove (\ref{eq:upper-bi}), let
$u\in X$. Then, with $v_{k}$ from the assumption (\ref{eq:upper-Wi}) and with
$u_{k}=Q_{k}G_{k}v_{k}$ as in (\ref{eq:def-bk}), it follows that%
\[
u=\sum_{k=1}^{M}u_{k},\quad\sum_{k=1}^{M}\left\Vert u_{k}\right\Vert _{b_{k}%
}^{2}=\sum_{k=1}^{M}\left\Vert v_{k}\right\Vert _{a}^{2}\leq C_{0}\left\Vert
u\right\Vert _{a}^{2}.
\]
Next, let $u_{k}\in X_{k}$. From the definitions of $X_{k}$ and $V_{k}%
^{\mathcal{M}}$, it follows that there exist unique $v_{k}\in V_{k}%
^{\mathcal{M}}$ such that $u_{k}=Q_{k}v_{k}$. Using the assumption
(\ref{eq:lower-Wi}) and the definition of $b_{k}$ in (\ref{eq:def-bk}), we
get
\[
\left\Vert u_{k}\right\Vert _{a}^{2}=\left\Vert Q_{k}v_{k}\right\Vert _{a}%
^{2}\leq\omega\left\Vert v_{k}\right\Vert _{a}^{2}=\omega\left\Vert
u_{k}\right\Vert _{b_{k}}^{2},
\]
which gives (\ref{eq:lower-bi}). Finally, (\ref{eq:cauchy-W}) is the same as
(\ref{eq:str-cauchy}).
\end{proof}

The next Corollary was given without proof in~\cite[Lemma~1]{Mandel-2007-OMB}.
This is the special case of Theorem~\ref{thm:BDDC-M} that will be actually
used. In the case when $M=1$, this result was proved in~\cite{Mandel-2007-ASF}.

\begin{corollary}
\label{cor:multispace-bddc}Assume that the subspaces $V_{k}$ are energy
orthogonal, the operators $Q_{k}$ are projections, $a\left(  \cdot
,\cdot\right)  $ is positive definite on each space $V_{k}$, and%
\begin{equation}
\forall u\in X:\left[  u=\sum_{k=1}^{M}v_{k},\ v_{k}\in V_{k}\right]
\Longrightarrow u=\sum_{k=1}^{M}Q_{k}v_{k}\text{.} \label{eq:dec-unity}%
\end{equation}
Then the abstract Multispace BDDC preconditioner from Algorithm
\ref{alg:multispace-bddc} satisfies%
\begin{equation}
{\kappa=\frac{\lambda_{\max}(BA_{X})}{\lambda_{\min}(BA_{X})}\leq\omega
=\max_{k}\sup_{v_{k}\in V_{k}}\frac{\left\Vert Q_{k}v_{k}\right\Vert _{a}^{2}%
}{\left\Vert v_{k}\right\Vert _{a}^{2}}}\text{ }{.}
\label{eq:multispace-bddc-bound}%
\end{equation}

\end{corollary}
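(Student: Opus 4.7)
The plan is to reduce the corollary to Theorem~\ref{thm:BDDC-M} by exhibiting constants $C_0=1$, the stated $\omega$, and a Cauchy--Schwarz matrix $\mathcal{E}=I$ so that $\rho(\mathcal{E})=1$; the abstract bound will then collapse to $\kappa\le C_0\,\omega\,\rho(\mathcal{E})=\omega$, as claimed. So the work consists of verifying the three hypotheses (\ref{eq:upper-Wi})--(\ref{eq:cauchy-W}) under the stronger structural assumptions of the corollary.

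For (\ref{eq:upper-Wi}) with $C_0=1$, I would exploit the energy-orthogonality of the $V_k$ directly. Given $u\in X$, the inclusion $X\subset\sum_k V_k$ lets me write $u=\sum_k v_k$ with $v_k\in V_k$; energy-orthogonality makes the $v_k$ unique (they are the $a$-orthogonal projections of $u$ onto each $V_k$ inside $W$), and Pythagoras yields the equality $\sum_k\|v_k\|_a^2=\|u\|_a^2$. Assumption (\ref{eq:dec-unity}) then converts this into the BDDC-style decomposition $u=\sum_k Q_k v_k$, giving (\ref{eq:upper-Wi}) with equality, hence $C_0=1$. Condition (\ref{eq:lower-Wi}) is essentially free: $V_k^{\mathcal{M}}\subset V_k$, and the supremum in the corollary's definition of $\omega$ is already taken over all of $V_k$, so the pointwise bound on the minimum-energy representatives is immediate with the same $\omega$.

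The only nontrivial step is (\ref{eq:cauchy-W}), where I aim for $e_{ij}=\delta_{ij}$. Concretely, I need to prove that the images $Q_iV_i$ and $Q_jV_j$ are $a$-orthogonal whenever $i\neq j$, so that $a(z_i,z_j)=0$ and one can take $\mathcal{E}=I$. My plan is to combine the two remaining hypotheses of the corollary: the idempotency (projection) of each $Q_k$ and the ``partition of unity'' property (\ref{eq:dec-unity}). Given $v_i\in V_i$ and $v_j\in V_j$, I would decompose $Q_iv_i\in X$ in the energy-orthogonal sum $\sum V_\ell$, apply (\ref{eq:dec-unity}) to match this with $\sum Q_\ell w_\ell$, and then use the projection property of the $Q_\ell$ to conclude that only the $\ell=i$ component of $Q_iv_i$ survives (and analogously for $Q_jv_j$); the $a$-orthogonality $V_i\perp_a V_j$ then kills the inner product.

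The main obstacle is precisely this Cauchy--Schwarz step: translating the abstract word ``projection'' into a concrete identity that forces $Q_kV_k$ to inherit the $V_k$-localization of the energy decomposition. Everything else (the $C_0=1$ argument and the cosmetic $V_k^{\mathcal{M}}\subset V_k$ observation) is routine. Once the orthogonality of the $Q_kV_k$ is in hand, the three Dryja--Widlund conditions are in place with $C_0=\rho(\mathcal{E})=1$ and Theorem~\ref{thm:BDDC-M} delivers (\ref{eq:multispace-bddc-bound}).
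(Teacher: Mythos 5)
Your overall reduction is exactly the paper's: verify (\ref{eq:upper-Wi}) with $C_{0}=1$ by taking the $v_{k}$ to be the $a$-orthogonal components of $u$ in $\bigoplus_{k}V_{k}$ and invoking (\ref{eq:dec-unity}), note that (\ref{eq:lower-Wi}) is subsumed by the definition of $\omega$ because $V_{k}^{\mathcal{M}}\subset V_{k}$, and take $\mathcal{E}=I$ in (\ref{eq:cauchy-W}) so that Theorem~\ref{thm:BDDC-M} gives $\kappa\leq\omega$. The first two steps are correct and coincide with the paper's proof.

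The gap is in the third step, which you yourself flag as the main obstacle. Condition (\ref{eq:cauchy-W}) with $\mathcal{E}=I$ requires the \emph{image} spaces $Q_{i}V_{i}$ and $Q_{j}V_{j}$ to be mutually $a$-orthogonal, and the argument you sketch --- decompose $Q_{i}v_{i}\in X$ in $\bigoplus_{\ell}V_{\ell}$, apply (\ref{eq:dec-unity}) and idempotency, and conclude that only the $\ell=i$ component survives --- would amount to proving $Q_{i}V_{i}\subset V_{i}$. Nothing in the hypotheses gives that: $Q_{i}$ maps $V_{i}$ into $X$, which is in general transversal to the individual summands $V_{\ell}$, and (\ref{eq:dec-unity}) constrains $\sum_{\ell}Q_{\ell}v_{\ell}$ only for tuples whose sum already lies in $X$, so specializing $v_{j}=0$ for $j\neq i$ tells you $Q_{i}v_{i}=v_{i}$ only for $v_{i}\in V_{i}\cap X$. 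In fact one can construct low-dimensional examples satisfying every hypothesis of the corollary in which $Q_{1}V_{1}\cap Q_{2}V_{2}\neq\{0\}$, so orthogonality of the images is not a formal consequence of orthogonality of the $V_{k}$ together with idempotency and (\ref{eq:dec-unity}); any correct treatment must either assume it or exploit extra structure. For the record, the paper itself dispatches this step with the single assertion that (\ref{eq:cauchy-W}) with $\mathcal{E}=I$ ``follows from the orthogonality of the subspaces $V_{k}$,'' so it does not supply the missing argument either; in its concrete applications the subspaces whose images fail to be orthogonal always share the same operator $Q_{k}$, and the quantity actually estimated (as in Theorem~\ref{thm:element-bound}) is a supremum over the sum of those subspaces. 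So your instinct about where the difficulty lies is sound, but the step is not proved in your proposal, and the route you propose for it would fail.
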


\begin{proof}
We only need to verify the assumptions of Theorem~\ref{thm:BDDC-M}. Let $u\in
X$ and choose $v_{k}$ as the energy orthogonal projections of $u$ on $V_{k}$.
First, since the spaces $V_{k}$ are energy orthogonal, $u=%
{\textstyle\sum}
v_{k}$, $Q_{k}$\ are projections, and from~(\ref{eq:dec-unity}) $u=%
{\textstyle\sum}
Q_{k}v_{k}$, we get that $\left\Vert u\right\Vert _{a}^{2}=%
{\textstyle\sum}
\left\Vert v_{k}\right\Vert _{a}^{2}$ which proves (\ref{eq:upper-Wi})\ with
$C_{0}=1$. Next, the assumption (\ref{eq:lower-Wi}) becomes the definition of
${\omega} $\ in (\ref{eq:multispace-bddc-bound}). Finally, (\ref{eq:cauchy-W})
with $\mathcal{E}=I$ follows from the orthogonality of subspaces $V_{k}$.
\end{proof}

\begin{remark}
The assumption (\ref{eq:dec-unity}) can be written as%
\[
\left.  \sum_{k=1}^{M}Q_{k}P_{k}\right\vert _{X}=I,
\]
where $P_{k}$ is the $a$-orthogonal projection from $\bigoplus\nolimits_{j=1}%
^{M}V_{j}$ onto $V_{k}$. Hence, the property (\ref{eq:dec-unity}) is a type of
decomposition of unity.

In the case when $M=1$, (\ref{eq:dec-unity}) means that the projection $Q_{1}
$ is onto $X$.
\end{remark}

\section{Finite Element Problem Setting}

\label{sec:FE-setting}

Let $\Omega$ be a bounded domain in $\mathbb{R}^{d}$, $d=2$ or $3$, decomposed
into $N$ nonoverlapping subdomains $\Omega^{s}$, $s=1,...,N$, which form a
conforming triangulation of the domain $\Omega$. Subdomains will be also
called substructures. Each substructure is a union of Lagrangian $P1$ or $Q1$
finite elements with characteristic mesh size $h$, and the nodes of the finite
elements between substructures coincide. The nodes contained in the
intersection of at least two substructures are called boundary nodes. The
union of all boundary nodes is called the interface $\Gamma$. The
interface$~\Gamma$ is a union of three different types of open sets:
\emph{faces}, \emph{edges}, and \emph{vertices}. The substructure vertices
will be also called corners. For the case of regular substructures, such as
cubes or tetrahedrons, we can use standard geometric definition of faces,
edges, and vertices; cf., e.g.,~\cite{Klawonn-2006-DPF} for a more general definition.

In this paper, we find it more convenient to use the notation of abstract
linear spaces and linear operators between them instead of the space
$\mathbb{R}^{n}$ and matrices. The results can be easily converted to the
matrix language by choosing a finite element basis. The space of the finite
element functions on $\Omega$ will be denoted as $U$. Let $W^{s}$ be the space
of finite element functions\ on substructure $\Omega^{s}$, such that all of
their degrees of freedom on $\partial\Omega^{s}\cap\partial\Omega$ are zero.
Let%
\[
W=W^{1}\times\cdots\times W^{N},
\]
and consider a bilinear form arising from the second-order scalar elliptic
problem as%
\begin{equation}
a\left(  u,v\right)  =\sum_{s=1}^{N}\int_{\Omega^{S}}\nabla u\nabla
v\,dx,\quad u,v\in W. \label{eq:scalar-bilinear-form}%
\end{equation}

Now $U\subset W$ is the subspace of all functions from $W$ that are continuous
across the substructure interfaces. We are interested in the solution of the
problem~(\ref{eq:problem}) with $X=U$,%
\begin{equation}
u\in U:a(u,v)=\left\langle f,v\right\rangle ,\quad\forall v\in U,
\label{eq:problem-full-space}%
\end{equation}
where the bilinear form $a$ is associated on the space $U$\ with the system
operator$~A$, defined by
\begin{equation}
A:U\mapsto U^{\prime},\quad a(u,v)=\left\langle Au,v\right\rangle \text{ for
all }u,v\in U, \label{eq:def-A}%
\end{equation}
and $f\in U^{\prime}$ is the right-hand side. Hence,
(\ref{eq:problem-full-space}) is equivalent to%
\begin{equation}
Au=f. \label{eq:problem-full-space-algebraic}%
\end{equation}

Define $U_{I}\subset U$ as the subspace of functions that are zero on the
interface $\Gamma$,
i.e., the \textquotedblleft interior\textquotedblright\ functions. Denote by
$P$ the energy orthogonal projection from $W$ onto $U_{I}$,%
\[
P:w\in W\longmapsto v_{I}\in U_{I}:a\left(  v_{I},z_{I}\right)  =a\left(
w,z_{I}\right)  ,\quad\forall z_{I}\in U_{I}.
\]
Functions from $\left(  I-P\right)  W$, i.e., from the nullspace of $P,$ are
called discrete harmonic; these functions are $a$-orthogonal to $U_{I}$ and
energy minimal with respect to increments in $U_{I}$. Next, let $\widehat{W}$
be the space of all discrete harmonic functions that are continuous across
substructure boundaries, that is
\begin{equation}
\widehat{W}=\left(  I-P\right)  U. \label{eq:discrete-harm}%
\end{equation}
In particular,
\begin{equation}
U=U_{I}\oplus\widehat{W},\quad U_{I}\perp_{a}\widehat{W}.
\label{eq:int-harm-dec}%
\end{equation}

A common approach in substructuring is to reduce the problem to the interface.
The problem (\ref{eq:problem-full-space}) is equivalent to two independent
problems on the energy orthogonal subspaces $U_{I}$ and $\widehat{W}$, and the
solution $u$ satisfies $u=u_{I}+\widehat{u}$, where%

\begin{align}
u  &  \in U_{I}:a(u_{I},v_{I})=\left\langle f,v_{I}\right\rangle ,\quad\forall
v_{I}\in U_{I},\label{eq:problem-int-1}\\
u  &  \in\widehat{W}:a(\widehat{u},\widehat{v})=\left\langle f,\widehat
{v}\right\rangle ,\quad\forall\widehat{v}\in\widehat{W}.
\label{eq:problem-reduced}%
\end{align}
The solution of the interior problem (\ref{eq:problem-int-1}) decomposes into
independent problems, one per each substructure. The reduced problem
(\ref{eq:problem-reduced}) is then solved by preconditioned conjugate
gradients. The reduced problem (\ref{eq:problem-reduced}) is usually written
equivalently as%
\[
u\in\widehat{W}:s(\widehat{u},\widehat{v})=\left\langle g,\widehat
{v}\right\rangle ,\quad\forall\widehat{v}\in\widehat{W},
\]
where $s$ is the form $a$ restricted on the subspace $\widehat{W}$, and $g$ is
the reduced right hand side, i.e., the functional $f$ restricted to the space
$\widehat{W}$. The reduced right-hand side $g$ is usually written as%
\begin{equation}
\left\langle g,\widehat{v}\right\rangle =\left\langle f,\widehat
{v}\right\rangle -a(u_{I},\widehat{v}),\quad\forall\widehat{v}\in\widehat{W},
\label{eq:reduced-rhs}%
\end{equation}
because $a(u_{I},\widehat{v})=0$ by (\ref{eq:int-harm-dec}). In the
implementation, the process of passing to the reduced problem becomes the
elimination of the internal degrees of freedom of the substructures, also
known as static condensation. The matrix of the reduced bilinear form $s$ in
the basis defined by interface degrees of freedom becomes the Schur
complement, and (\ref{eq:reduced-rhs}) becomes the reduced right-hand side.
For details on the matrix formulation, see, e.g., \cite[Sec. 4.6]%
{Smith-1996-DD} or \cite[Sec. 4.3]{Toselli-2005-DDM}.

The BDDC\ method is a two-level preconditioner characterized by the selection
of certain \emph{coarse degrees of freedom}, such as values at the corners and
averages over edges or faces of substructures. Define $\widetilde{W}\subset W$
as the subspace of all functions such that the values of any coarse degrees of
freedom have a common value for all relevant substructures and vanish on
$\partial\Omega,$ and $\widetilde{W}_{\Delta}\subset W$ as the subspace of all
function such that their coarse degrees of freedom vanish. Next, define
$\widetilde{W}_{\Pi}$ as the subspace of all functions such that their coarse
degrees of freedom between adjacent substructures coincide, and such that
their energy is minimal. Clearly, functions in $\widetilde{W}_{\Pi}$ are
uniquely determined by the values of their coarse degrees of freedom, and
\begin{equation}
\widetilde{W}_{\Delta}\perp_{a}\widetilde{W}_{\Pi},\text{\quad and\quad
}\widetilde{W}=\widetilde{W}_{\Delta}\oplus\widetilde{W}_{\Pi}.
\label{eq:tilde-dec}%
\end{equation}

We assume that%
\begin{equation}
a\text{ is positive definite on }\widetilde{W}. \label{eq:pos-def}%
\end{equation}
That is the case when $a$ is positive definite on the space $U$, where the
problem~(\ref{eq:problem}) is posed, and there are sufficiently many coarse
degrees of freedom. We further assume that the coarse degrees of freedom are
zero on all functions from $U_{I}$, that is,%
\begin{equation}
U_{I}\subset\widetilde{W}_{\Delta}. \label{eq:coarse-int}%
\end{equation}
In other words, the coarse degrees of freedom depend on the values on
substructure boundaries only. From (\ref{eq:tilde-dec}) and
(\ref{eq:coarse-int}), it follows that the functions in $\widetilde{W}_{\Pi}$
are discrete harmonic, that is,%
\begin{equation}
\widetilde{W}_{\Pi}=\left(  I-P\right)  \widetilde{W}_{\Pi}.
\label{eq:coarse-is-discrete-harmonic}%
\end{equation}

Next, let $E$ be a projection from $\widetilde{W}$ onto $U$, defined by taking
some weighted average on substructure interfaces. That is, we assume that%
\begin{equation}
E:\widetilde{W}\rightarrow U,\quad EU=U,\quad E^{2}=E. \label{eq:E-onto-U}%
\end{equation}
Since a projection is the identity on its range, it follows that $E$ does not
change the interior degrees of freedom,
\begin{equation}
EU_{I}=U_{I}, \label{eq:int-unchanged}%
\end{equation}
since $U_{I}\subset U$. Finally, we show that the operator $\left(
I-P\right)  E$ is a projection. From (\ref{eq:int-unchanged}) it follows that
$E$ does not change interior degrees of freedom, so $EP=P$. Then, using the
fact that $I-P$ and $E$ are projections, we get%
\begin{align}
\left[  \left(  I-P\right)  E\right]  ^{2}  &  =\left(  I-P\right)  E\left(
I-P\right)  E\nonumber\\
&  =\left(  I-P\right)  \left(  E-P\right)  E\label{eq:(I-P)E}\\
&  =\left(  I-P\right)  \left(  I-P\right)  E=\left(  I-P\right)  E.\nonumber
\end{align}

\begin{remark}
In~\cite{Mandel-2005-ATP,Mandel-2007-ASF}, the whole analysis was done in
spaces of discrete harmonic functions after eliminating $U_{I}$, and the space
$\widehat{W}$ was the solution space. In particular, $\widetilde{W}$ consisted
of discrete harmonic functions only, while the same space here would be
$(I-P)\widetilde{W}$. The decomposition of this space used
in~\cite{Mandel-2005-ATP,Mandel-2007-ASF} would be\ in our context written as
\begin{equation}
(I-P)\widetilde{W}=(I-P)\widetilde{W}_{\Delta}\oplus\widetilde{W}_{\Pi}%
,\quad(I-P)\widetilde{W}_{\Delta}\perp_{a}\widetilde{W}_{\Pi}.
\label{eq:dharm-dec}%
\end{equation}

\end{remark}

In the next section, the space $X$ will be either $U$ or $\widehat{W}$.

\section{Two-level BDDC as Multispace BDDC}

\label{sec:one-level}

We show several different ways the original, two-level, BDDC\ algorithm can be
interpreted as multispace BDDC. We consider first BDDC applied to the reduced
problem~(\ref{eq:problem-reduced}), that is, (\ref{eq:problem}) with
$X=\widehat{W}$. This was the formulation considered in \cite{Mandel-2005-ATP}%
. Define the space of discrete harmonic functions with coarse degrees of
freedom continuous across the interface%
\[
\widetilde{W}_{\Gamma}=\left(  I-P\right)  \widetilde{W}.
\]
Because we work in the space of discrete harmonic functions and the output of
the averaging operator $E$ is not discrete harmonic, denote%
\begin{equation}
E_{\Gamma}=\left(  I-P\right)  E. \label{eq:E-gamma}%
\end{equation}
In an implementation, discrete harmonic functions are represented by the
values of their degrees of freedom on substructure interfaces, cf.,
e.g.~\cite{Toselli-2005-DDM}; hence, the definition (\ref{eq:E-gamma}) serves
formal purposes only, so that everything can be written in terms of discrete
harmonic functions without passing to the matrix formulation.

\begin{algorithm}
[\cite{Mandel-2007-ASF}, BDDC on the reduced problem]\label{alg:bddc-elim-int}%
Define the preconditioner $r\in\widehat{W}^{\prime}\longmapsto u\in\widehat
{W}$ by%
\begin{equation}
u=E_{\Gamma}w_{\Gamma},\quad w_{\Gamma}\in\widetilde{W}_{\Gamma}:a\left(
w_{\Gamma},z_{\Gamma}\right)  =\left\langle r,E_{\Gamma}z_{\Gamma
}\right\rangle ,\quad\forall z_{\Gamma}\in\widetilde{W}_{\Gamma}.
\label{eq:bddc-elim-int-alg-1}%
\end{equation}

\end{algorithm}

\begin{proposition}
[\cite{Mandel-2007-ASF}]The BDDC preconditioner on the reduced problem in
Algorithm~\ref{alg:bddc-elim-int} is the abstract Multispace BDDC from
Algorithm~\ref{alg:multispace-bddc} with $M=1$ and the space and operator
given by
\begin{equation}
X=\widehat{W},\quad V_{1}=\widetilde{W}_{\Gamma},\quad Q_{1}=E_{\Gamma}.
\label{eq:bddc-elim-int-1}%
\end{equation}
Also, the assumptions of Corollary~\ref{cor:multispace-bddc} are satisfied.
\end{proposition}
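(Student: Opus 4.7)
My plan is to verify the claim in two stages: first, a direct syntactic match between the two algorithms, and second, a check of the four hypotheses of Corollary~\ref{cor:multispace-bddc}.

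For the first stage, I would substitute $M=1$, $X=\widehat{W}$, $V_{1}=\widetilde{W}_{\Gamma}$, $Q_{1}=E_{\Gamma}$ directly into the defining formula~(\ref{def:abs-mult-BDDC}) of Algorithm~\ref{alg:multispace-bddc}. The summation collapses, and the resulting formula reads $u = E_{\Gamma}w_{\Gamma}$ with $w_{\Gamma}\in\widetilde{W}_{\Gamma}$ satisfying $a(w_{\Gamma},z_{\Gamma})=\langle r,E_{\Gamma}z_{\Gamma}\rangle$ for all $z_{\Gamma}\in\widetilde{W}_{\Gamma}$, which is exactly~(\ref{eq:bddc-elim-int-alg-1}). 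I also need to confirm the inclusion $X\subset V_{1}$ required by Algorithm~\ref{alg:multispace-bddc}: since $U\subset\widetilde{W}$, applying $I-P$ yields $\widehat{W}=(I-P)U\subset (I-P)\widetilde{W}=\widetilde{W}_{\Gamma}$.

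For the second stage, I check the four assumptions of Corollary~\ref{cor:multispace-bddc} in turn. Energy orthogonality of the subspaces $V_{k}$ is vacuous because $M=1$. Positive definiteness of $a$ on $V_{1}=\widetilde{W}_{\Gamma}\subset\widetilde{W}$ is inherited from the standing assumption~(\ref{eq:pos-def}). The fact that $Q_{1}=E_{\Gamma}$ is a projection is exactly the identity $[(I-P)E]^{2}=(I-P)E$ established in~(\ref{eq:(I-P)E}), restricted to $\widetilde{W}_{\Gamma}$. Finally, for $M=1$, condition~(\ref{eq:dec-unity}) reduces, as noted in the remark following the corollary, to the requirement that $Q_{1}$ be onto $X$; equivalently, that $E_{\Gamma}$ act as the identity on $\widehat{W}$.

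The one step that requires a small computation is this last claim. I would verify it by observing that every $u\in\widehat{W}=(I-P)U$ satisfies $u\in U$ and $Pu=0$ (since $P$ is a projection, $P(I-P)=0$). Because $E$ is a projection onto $U$ by~(\ref{eq:E-onto-U}), $Eu=u$, and hence $E_{\Gamma}u=(I-P)Eu=(I-P)u=u$. This shows $E_{\Gamma}$ is onto $\widehat{W}$ and completes the verification.

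I do not anticipate a genuine obstacle here; the proposition is essentially a bookkeeping exercise matching the abstract framework to the reduced BDDC algorithm. The only point demanding a moment of care is recognizing that the idempotence of $E_{\Gamma}$ and the identity $E_{\Gamma}|_{\widehat{W}}=I$ both follow from the interplay $EP=P$ already exploited in~(\ref{eq:(I-P)E}), rather than from any additional structural hypothesis on the averaging operator $E$.
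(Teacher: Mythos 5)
Your proof is correct and follows essentially the same route as the paper's: positive definiteness of $a$ on $\widetilde{W}_{\Gamma}\subset\widetilde{W}$ from~(\ref{eq:pos-def}), idempotence of $E_{\Gamma}$ from~(\ref{eq:(I-P)E}), and surjectivity of $E_{\Gamma}$ onto $\widehat{W}$ (which you phrase equivalently as $E_{\Gamma}|_{\widehat{W}}=I$, using that the projection $E$ is the identity on $U$ and $P$ annihilates $\widehat{W}$). Your explicit check of the inclusion $\widehat{W}\subset\widetilde{W}_{\Gamma}$ is a small detail the paper leaves implicit, but nothing of substance differs.
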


\begin{proof}
We only need to note that the bilinear form $a(\cdot,\cdot)$ is positive
definite on $\widetilde{W}_{\Gamma}\subset\widetilde{W}$ by~(\ref{eq:pos-def}%
), and the operator $E_{\Gamma}$ defined by~(\ref{eq:E-gamma}) is a projection
by~(\ref{eq:(I-P)E}). The projection $E_{\Gamma}$ is onto $\widehat{W}$
because $E$ is onto $U$ by (\ref{eq:E-onto-U}), and $I-P$ maps $U$ onto
$\widehat{W}$ by the definition of $\widehat{W}$ in (\ref{eq:discrete-harm}).
\end{proof}

Using the decomposition (\ref{eq:dharm-dec}), we can split the solution in the
space $\widetilde{W}_{\Gamma}$ into the independent solution of two
subproblems: mutually independent problems on substructures as the solution in
the space $\widetilde{W}_{\Gamma\Delta}=(I-P)\widetilde{W}_{\Delta}$,\ and a
solution of global coarse problem in the space $\widetilde{W}_{\Pi}$. The
space $\widetilde{W}_{\Gamma}$ has a decomposition
\begin{equation}
\widetilde{W}_{\Gamma}=\widetilde{W}_{\Gamma\Delta}\oplus\widetilde{W}_{\Pi
},\text{\quad and\quad}\widetilde{W}_{\Gamma\Delta}\perp_{a}\widetilde{W}%
_{\Pi}, \label{eq:dharm-dec-interface}%
\end{equation}
the same as the decomposition (\ref{eq:dharm-dec}), and
Algorithm~\ref{alg:bddc-elim-int} can be rewritten as follows.

\begin{algorithm}
[\cite{Mandel-2005-ATP}, BDDC on the reduced problem]%
\label{alg:bddc-elim-int-2}Define the preconditioner $r\in\widehat{W}^{\prime
}\longmapsto u\in\widehat{W}$ by $u=E_{\Gamma}\left(  w_{\Gamma\Delta}+w_{\Pi
}\right)  $, where%
\begin{align}
w_{\Gamma\Delta}  &  \in\widetilde{W}_{\Gamma\Delta}:a\left(  w_{\Gamma\Delta
},z_{\Gamma\Delta}\right)  =\left\langle r,E_{\Gamma}z_{\Gamma\Delta
}\right\rangle ,\quad\forall z_{\Gamma\Delta}\in\widetilde{W}_{\Gamma\Delta
},\label{eq:discr-harm-subs-corr}\\
w_{\Pi}  &  \in\widetilde{W}_{\Pi}:a\left(  w_{\Pi},z_{\Gamma\Pi}\right)
=\left\langle r,E_{\Gamma}z_{\Gamma\Pi}\right\rangle ,\quad\forall
z_{\Gamma\Pi}\in\widetilde{W}_{\Pi}. \label{eq:discr-harm-coarse-corr}%
\end{align}

\end{algorithm}

\begin{proposition}
The BDDC preconditioner on the reduced problem in
Algorithm~\ref{alg:bddc-elim-int-2} is the abstract Multispace BDDC from
Algorithm~\ref{alg:multispace-bddc} with $M=2$ and the spaces and operators
given by
\begin{equation}
X=\widehat{W},\quad V_{1}=\widetilde{W}_{\Gamma\Delta},\quad V_{2}%
=\widetilde{W}_{\Pi},\quad Q_{1}=Q_{2}=E_{\Gamma}. \label{eq:bddc-elim-int-2}%
\end{equation}
Also, the assumptions of Corollary~\ref{cor:multispace-bddc} are satisfied.
\end{proposition}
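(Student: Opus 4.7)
The plan is to verify directly that Algorithm~\ref{alg:bddc-elim-int-2} coincides with the abstract Multispace BDDC of Algorithm~\ref{alg:multispace-bddc} under the substitution (\ref{eq:bddc-elim-int-2}), and then to check each of the four hypotheses of Corollary~\ref{cor:multispace-bddc}, leaning on the facts already assembled in the proof of the previous proposition.

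First, I would identify the two algorithms. Substituting $V_1=\widetilde{W}_{\Gamma\Delta}$, $V_2=\widetilde{W}_\Pi$, $Q_1=Q_2=E_\Gamma$ into the variational problem (\ref{def:abs-mult-BDDC}) yields exactly (\ref{eq:discr-harm-subs-corr}) and (\ref{eq:discr-harm-coarse-corr}) under the identifications $v_1=w_{\Gamma\Delta}$ and $v_2=w_\Pi$. The output in Algorithm~\ref{alg:multispace-bddc} is then $u=Q_1 v_1 + Q_2 v_2 = E_\Gamma w_{\Gamma\Delta} + E_\Gamma w_\Pi = E_\Gamma(w_{\Gamma\Delta}+w_\Pi)$ by linearity, matching the output of Algorithm~\ref{alg:bddc-elim-int-2}.

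Next, I would verify the hypotheses of Corollary~\ref{cor:multispace-bddc}. Energy orthogonality of $V_1$ and $V_2$ is precisely the decomposition~(\ref{eq:dharm-dec-interface}). The operator $E_\Gamma=(I-P)E$ is a projection by~(\ref{eq:(I-P)E}), so the common choice $Q_1=Q_2=E_\Gamma$ satisfies the projection hypothesis; its range is $\widehat{W}=X$ as already shown in the proof of the previous proposition, using that $E$ is onto $U$ by~(\ref{eq:E-onto-U}) and that $I-P$ maps $U$ onto $\widehat{W}$ by~(\ref{eq:discrete-harm}). The form $a$ is positive definite on $\widetilde{W}$ by~(\ref{eq:pos-def}), hence on each $V_k$ since $V_1,V_2\subset\widetilde{W}_\Gamma\subset\widetilde{W}$.

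The remaining item is the decomposition of unity (\ref{eq:dec-unity}). Given any $u\in X=\widehat{W}$ and any decomposition $u=v_1+v_2$ with $v_k\in V_k$, linearity of $E_\Gamma$ gives $Q_1 v_1 + Q_2 v_2 = E_\Gamma(v_1+v_2) = E_\Gamma u = u$, where the last step uses that $E_\Gamma$ is a projection onto $\widehat{W}$ and therefore the identity on $X$. No step here is genuinely difficult; the only point requiring a moment of care is that the two substructure/coarse subproblems in Algorithm~\ref{alg:bddc-elim-int-2} really do furnish an energy-orthogonal splitting of the single subproblem in Algorithm~\ref{alg:bddc-elim-int}, which is exactly what (\ref{eq:dharm-dec-interface}) provides.
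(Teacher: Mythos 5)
Your proof is correct and follows essentially the same route as the paper's: identify the two subproblems of Algorithm~\ref{alg:bddc-elim-int-2} with the Multispace BDDC subproblems via the splitting (\ref{eq:dharm-dec-interface}), then check the hypotheses of Corollary~\ref{cor:multispace-bddc} using (\ref{eq:pos-def}) and (\ref{eq:(I-P)E}). You are in fact slightly more explicit than the paper, which leaves the decomposition-of-unity check (\ref{eq:dec-unity}) implicit rather than spelling out that $E_{\Gamma}$ acts as the identity on $\widehat{W}$.
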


\begin{proof}
Let $r\in\widehat{W}^{\prime}$. Define the vectors $v_{i}$, $i=1,2$\ in
Multispace BDDC\ by (\ref{def:abs-mult-BDDC}) with $V_{i}$ and $Q_{i}$ given
by (\ref{eq:bddc-elim-int-2}). Let $u,$ $w_{\Gamma\Delta},$ $w_{\Pi}$ be the
quantities in Algorithm~\ref{alg:bddc-elim-int-2}, defined by
(\ref{eq:discr-harm-subs-corr})-(\ref{eq:discr-harm-coarse-corr}). Using the
decomposition (\ref{eq:dharm-dec-interface}), any $w_{\Gamma}\in\widetilde
{W}_{\Gamma}$ can be written uniquely as $w_{\Gamma}=w_{\Gamma\Delta}$%
+$w_{\Pi}$ for some $w_{\Gamma\Delta}$\ and $w_{\Pi}$ corresponding to
(\ref{def:abs-mult-BDDC})\ as $v_{1}=w_{\Gamma\Delta}$\ and $v_{2}=w_{\Pi}$,
and $u=E_{\Gamma}\left(  w_{\Gamma\Delta}+w_{\Pi}\right)  $.

To verify the assumptions of Corollary~\ref{cor:multispace-bddc}, note that
the decomposition (\ref{eq:dharm-dec-interface}) is $a-$orthogonal,
$a(\cdot,\cdot)$\ is positive definite on both $\widetilde{W}_{\Gamma\Delta}%
$\ and $\widetilde{W}_{\Pi}$ as subspaces of $\widetilde{W}_{\Gamma}$
by~(\ref{eq:pos-def}), and $E_{\Gamma}$ is a projection by (\ref{eq:(I-P)E}).
\end{proof}

Next, we present a BDDC formulation\ on the space $U$ with explicit treatment
of interior functions in the space $U_{I}$ as in
\cite{Dohrmann-2003-PSC,Mandel-2003-CBD}, i.e., in the way the BDDC algorithm
was originally formulated.

\begin{algorithm}
[\cite{Dohrmann-2003-PSC,Mandel-2003-CBD}, original BDDC]%
\label{alg:original-bddc} Define the preconditioner $r\in U^{\prime
}\longmapsto u\in U$\ as follows. Compute the interior pre-correction%
\begin{equation}
u_{I}\in U_{I}:a\left(  u_{I},z_{I}\right)  =\left\langle r,z_{I}\right\rangle
,\quad\forall z_{I}\in U_{I}. \label{eq:int-corr}%
\end{equation}
Set up the updated residual%
\begin{equation}
r_{B}\in U^{\prime},\quad\left\langle r_{B},v\right\rangle =\left\langle
r,v\right\rangle -a\left(  u_{I},v\right)  ,\quad\forall v\in U.
\label{eq:def-rb}%
\end{equation}
Compute the substructure correction
\begin{equation}
u_{\Delta}=Ew_{\Delta},\quad w_{\Delta}\in\widetilde{W}_{\Delta}:a\left(
w_{\Delta},z_{\Delta}\right)  =\left\langle r_{B},Ez_{\Delta}\right\rangle
,\quad\forall z_{\Delta}\in\widetilde{W}_{\Delta}. \label{eq:subs-corr}%
\end{equation}
Compute the coarse correction%
\begin{equation}
u_{\Pi}=Ew_{\Pi},\quad w_{\Pi}\in\widetilde{W}_{\Pi}:a\left(  w_{\Pi},z_{\Pi
}\right)  =\left\langle r_{B},Ez_{\Pi}\right\rangle ,\quad\forall z_{\Pi}%
\in\widetilde{W}_{\Pi}. \label{eq:coarse}%
\end{equation}
Add the corrections%
\[
u_{B}=u_{\Delta}+u_{\Pi}.
\]
Compute the interior post-correction%
\begin{equation}
v_{I}\in U_{I}:a\left(  v_{I},z_{I}\right)  =a\left(  u_{B},z_{I}\right)
,\quad\forall z_{I}\in U_{I}. \label{eq:int-post-corr}%
\end{equation}
Apply the combined corrections%
\begin{equation}
u=u_{B}-v_{I}+u_{I}. \label{eq:sol}%
\end{equation}

\end{algorithm}

The interior corrections (\ref{eq:int-corr}) and (\ref{eq:int-post-corr})
decompose into independent Dirichlet problems, one for each substructure. The
substructure correction (\ref{eq:subs-corr}) decomposes into independent
constrained Neumann problems, one for each substructure. Thus, the evaluation
of the preconditioner requires three problems to be solved in each
substructure, plus solution of the coarse problem~(\ref{eq:coarse}). In
addition, the substructure corrections can be solved in parallel with the
coarse problem.

\begin{remark}
As it is well known \cite{Dohrmann-2003-PSC}, the first interior correction
(\ref{eq:int-corr}) can be omitted in the implementation by starting the
iterations from an initial solution such that the residual in the interior of
the substructures is zero,
\[
a\left(  u,z_{I}\right)  -\left\langle f_{X},z_{I}\right\rangle =0,\quad
\forall z_{I}\in U_{I},
\]
i.e., such that the error is discrete harmonic. Then the output of the
preconditioner is discrete harmonic and thus the errors in all the CG
iterations (which are linear combinations of the original error and outputs
from the preconditioner) are also discrete harmonic by induction.
\end{remark}

The following proposition will be the starting point for the multilevel case.

\begin{proposition}
\label{prop:original-bddc-as-multispace}The original BDDC preconditioner in
Algorithm~\ref{alg:original-bddc} is the abstract Multispace BDDC from
Algorithm~\ref{alg:multispace-bddc} with $M=3$ and the spaces and operators
given by%
\begin{align}
X  &  =U,\quad V_{1}=U_{I},\quad V_{2}=(I-P)\widetilde{W}_{\Delta},\quad
V_{3}=\widetilde{W}_{\Pi},\label{eq:3-space}\\
Q_{1}  &  =I,\quad Q_{2}=Q_{3}=\left(  I-P\right)  E, \label{eq:3-proj}%
\end{align}
and the assumptions of Corollary~\ref{cor:multispace-bddc} are satisfied.
\end{proposition}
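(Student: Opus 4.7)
The plan is to apply Corollary~\ref{cor:multispace-bddc} with the data in~(\ref{eq:3-space})--(\ref{eq:3-proj}), and in parallel to verify that Multispace BDDC with this data reproduces the output of Algorithm~\ref{alg:original-bddc} line by line.

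The structural hypotheses of the Corollary follow quickly from material already in Section~\ref{sec:FE-setting}, and I would dispose of them first. Both $V_2$ and $V_3$ consist of discrete harmonic functions ($V_2$ by construction, $V_3$ by~(\ref{eq:coarse-is-discrete-harmonic})), so each is $a$-orthogonal to $V_1=U_I$, while mutual $a$-orthogonality of $V_2$ and $V_3$ is exactly~(\ref{eq:dharm-dec}); $Q_1=I$ is trivially a projection and $Q_2=Q_3=(I-P)E$ is one by~(\ref{eq:(I-P)E}); positive definiteness of $a$ on each $V_k$ is inherited from~(\ref{eq:pos-def}) for $V_2,V_3\subset\widetilde W$ and from well-posedness of the original problem on $V_1\subset U$. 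For the decomposition of unity~(\ref{eq:dec-unity}) I would first use $U\subset\widetilde W$ to get $\widehat W=(I-P)U\subset(I-P)\widetilde W=V_2+V_3$, so $U=U_I\oplus\widehat W\subset V_1+V_2+V_3$; and then, given $u=v_1+v_2+v_3$ with $v_k\in V_k$, the $a$-orthogonality of $V_1$ to $V_2+V_3$ forces $v_2+v_3=(I-P)u\in\widehat W\subset U$, on which the projection $E$ acts as the identity, so $(I-P)E(v_2+v_3)=v_2+v_3$ and $\sum_k Q_k v_k=u$ follows.

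The core work, and the step I expect to be the most delicate, is matching the operational stages of Algorithm~\ref{alg:original-bddc} to the three subspace problems in~(\ref{def:abs-mult-BDDC}). The key tool is the residual identity
\[
\langle r_B,Ez\rangle=\langle r,(I-P)Ez\rangle\qquad\forall z\in\widetilde W,
\]
which I would obtain by splitting $Ez=PEz+(I-P)Ez$ in~(\ref{eq:def-rb}), using $U_I\perp_a\widehat W$, and applying~(\ref{eq:int-corr}) to the test function $PEz\in U_I$. With this identity, the equations~(\ref{eq:int-corr}), (\ref{eq:subs-corr}) and~(\ref{eq:coarse}) become precisely the $k=1,2,3$ subspace problems of Multispace BDDC, giving $v_1=u_I$, $v_2=w_\Delta$, $v_3=w_\Pi$. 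For $k=2$ one additional check is needed, namely that the Multispace test space $V_2=(I-P)\widetilde W_\Delta$ can replace the $\widetilde W_\Delta$ used in~(\ref{eq:subs-corr}); testing~(\ref{eq:subs-corr}) against $z_I\in U_I\subset\widetilde W_\Delta$ gives right-hand side $(I-P)Ez_I=0$, which forces $Pw_\Delta=0$ and hence $w_\Delta\in V_2$, so the two problems agree. Finally, the post-correction~(\ref{eq:int-post-corr}) identifies $v_I=Pu_B$, so the assembly~(\ref{eq:sol}) becomes $u=(I-P)u_B+u_I=(I-P)Ew_\Delta+(I-P)Ew_\Pi+u_I=Q_2v_2+Q_3v_3+Q_1v_1$, which is the Multispace output. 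The main obstacle is bookkeeping: correctly tracking how the pre- and post- interior corrections, together with the factor $(I-P)$ hidden inside $Q_2$ and $Q_3$, all cooperate to eliminate the interior pieces.
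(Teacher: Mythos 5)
Your proposal is correct and follows essentially the same route as the paper's proof: the same orthogonality and projection checks, the same decomposition-of-unity argument via $E$ acting as the identity on $U$ and $w_\Delta+w_\Pi$ being discrete harmonic, and the same residual manipulation (the paper derives $\left\langle r_{B},Ez\right\rangle =\left\langle r,\left(  I-P\right)  Ez\right\rangle$ inline for $k=2$ and $k=3$ rather than isolating it as a lemma, and proves $Pw_{\Delta}=0$ by testing against $Pw_{\Delta}$ itself rather than against all of $U_{I}$, but these are the same computations). No gaps.
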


\begin{proof}
Let $r\in U^{\prime}$. Define the vectors $v_{i}$, $i=1,2,3$, in Multispace
BDDC by (\ref{def:abs-mult-BDDC}) with the spaces $V_{i}$ given by
(\ref{eq:3-space}) and with the operators $Q_{i}$ given by~(\ref{eq:3-proj}).
Let $u_{I}$, $r_{B}$, $w_{\Delta}$, $w_{\Pi}$, $u_{B}$, $v_{I}$, and $u$ be
the quantities in Algorithm~\ref{alg:original-bddc}, defined by
(\ref{eq:int-corr})-(\ref{eq:sol}).

First, with $V_{1}=U_{I}$, the definition of $v_{1}$ in
(\ref{def:abs-mult-BDDC}) with $k=1$ is identical to the definition of $u_{I}$
in (\ref{eq:int-corr}), so $u_{I}=v_{1}$.

Next, consider $w_{\Delta}\in\widetilde{W}_{\Delta}$ defined in
(\ref{eq:subs-corr}). We show that $w_{\Delta}$ satisfies
(\ref{def:abs-mult-BDDC}) with $k=2$, i.e., $v_{2}=w_{\Delta}$. So, let
$z_{\Delta}\in\widetilde{W}_{\Delta}$ be arbitrary. From (\ref{eq:subs-corr})
and (\ref{eq:def-rb}),
\begin{equation}
a\left(  w_{\Delta},z_{\Delta}\right)  =\left\langle r_{B},Ez_{\Delta
}\right\rangle =\left\langle r,Ez_{\Delta}\right\rangle -a\left(
u_{I},Ez_{\Delta}\right)  . \label{eq:w-delta}%
\end{equation}
Now from the definition of $u_{I}$ by (\ref{eq:int-corr}) and the fact that
$PEz_{\Delta}\in U_{I}$, we get
\begin{equation}
\left\langle r,PEz_{\Delta}\right\rangle -a\left(  u_{I},PEz_{\Delta}\right)
=0, \label{eq:pez}%
\end{equation}
and subtracting (\ref{eq:pez}) from (\ref{eq:w-delta}) gives%
\begin{align*}
a\left(  w_{\Delta},z_{\Delta}\right)   &  =\left\langle r,\left(  I-P\right)
Ez_{\Delta}\right\rangle -a\left(  u_{I},\left(  I-P\right)  Ez_{\Delta
}\right) \\
&  =\left\langle r,\left(  I-P\right)  Ez_{\Delta}\right\rangle ,
\end{align*}
because $a\left(  u_{I},\left(  I-P\right)  Ez_{\Delta}\right)  =0$ by
orthogonality. To verify (\ref{def:abs-mult-BDDC}), it is enough to show that
$Pw_{\Delta}=0;$ then $w_{\Delta}\in(I-P)\widetilde{W}_{\Delta}=V_{2}$. Since
$P$ is an $a$-orthogonal projection, it holds that%
\begin{equation}
a\left(  Pw_{\Delta},Pw_{\Delta}\right)  =a\left(  w_{\Delta},Pw_{\Delta
}\right)  =\left\langle r_{B},EPw_{\Delta}\right\rangle =0, \label{eq:APP}%
\end{equation}
where we have used $EU_{I}\subset U_{I}$ following the assumption
(\ref{eq:int-unchanged}) and the equality%
\[
\left\langle r_{B},z_{I}\right\rangle =\left\langle r,z_{I}\right\rangle
-a\left(  u_{I},z_{I}\right)  =0
\]
for any $z_{I}\in U_{I}$, which follows from (\ref{eq:def-rb}) and
(\ref{eq:int-corr}). Since $a$ is positive definite on $\widetilde{W}\supset
U_{I} $ by assumption (\ref{eq:pos-def}), it follows from (\ref{eq:APP})\ that
$Pw_{\Delta}=0$.

In exactly the same way, from (\ref{eq:coarse}) -- (\ref{eq:sol}), we get that
if $w_{\Pi}\in$ $\widetilde{W}_{\Pi}$ is defined by (\ref{eq:coarse}), then
$v_{3}=w_{\Pi}$ satisfies (\ref{def:abs-mult-BDDC}) with $k=3$. (The proof
that $Pw_{\Pi}=0$ can be simplified but there is nothing wrong with proceeding
exactly as for $w_{\Delta}$.)

Finally, from (\ref{eq:int-post-corr}), $v_{I}=P\left(  Ew_{\Delta}+Ew_{\Pi
}\right)  $, so%
\begin{align*}
u  &  =u_{I}+\left(  u_{B}-v_{I}\right) \\
&  =u_{I}+\left(  I-P\right)  Ew_{\Delta}+\left(  I-P\right)  Ew_{\Pi}\\
&  =Q_{1}v_{1}+Q_{2}v_{2}+Q_{3}v_{3}.
\end{align*}

It remains to verify the assumptions of Corollary~\ref{cor:multispace-bddc}.

First, the spaces $\widetilde{W}_{\Pi}$ and $\widetilde{W}_{\Delta}$ are $a
$-orthogonal by (\ref{eq:tilde-dec}) and, from (\ref{eq:coarse-int}),
\[
\left(  I-P\right)  \widetilde{W}_{\Delta}\subset\widetilde{W}_{\Delta},
\]
thus $\left(  I-P\right)  \widetilde{W}_{\Delta}\perp_{a}\widetilde{W}_{\Pi}
$. Clearly, $\left(  I-P\right)  \widetilde{W}_{\Delta}\perp_{a}U_{I}$. Since
$\widetilde{W}_{\Pi}$ consists of discrete harmonic functions from
(\ref{eq:coarse-is-discrete-harmonic}), so $\widetilde{W}_{\Pi}\perp_{a}U_{I}%
$, it follows that the spaces $V_{i}$, $i=1,2,3$, given by (\ref{eq:3-space}),
are $a$-orthogonal.


Next, $\left(  I-P\right)  E$ is by (\ref{eq:(I-P)E}) a projection, and so are
the operators $Q_{i}$ from~(\ref{eq:3-proj}).

It remains to prove the decomposition of unity (\ref{eq:dec-unity}). Let%
\begin{equation}
u^{\prime}=u_{I}+w_{\Delta}+w_{\Pi}\in U,\quad u_{I}\in U_{I},\quad w_{\Delta
}\in\left(  I-P\right)  \widetilde{W}_{\Delta},\quad w_{\Pi}\in\widetilde
{W}_{\Pi}, \label{eq:dec-u}%
\end{equation}
and let%
\[
v=u_{I}+\left(  I-P\right)  Ew_{\Delta}+\left(  I-P\right)  Ew_{\Pi}.
\]
From (\ref{eq:dec-u}), $w_{\Delta}+w_{\Pi}\in U$ since $u^{\prime}\in U$ and
$u_{I}\in U_{I}\subset U$. Then $E\left(  w_{\Delta}+w_{\Pi}\right)
=w_{\Delta}+w_{\Pi}$ by (\ref{eq:E-onto-U}), so%
\begin{align*}
v  &  =u_{I}+\left(  I-P\right)  E\left(  w_{\Delta}+w_{\Pi}\right) \\
&  =u_{I}+\left(  I-P\right)  \left(  w_{\Delta}+w_{\Pi}\right) \\
&  =u_{I}+w_{\Delta}+w_{\Pi}=u^{\prime},
\end{align*}
because both $w_{\Delta}$ and $w_{\Pi}$ are discrete harmonic.
\end{proof}

The next Theorem shows an equivalence of the three Algorithms introduced above.

\begin{theorem}
\label{thm:equiv}The eigenvalues of the preconditioned operators from
Algorithm~\ref{alg:bddc-elim-int}, and Algorithm~\ref{alg:bddc-elim-int-2} are
exactly the same. They are also the same as the eigenvalues from
Algorithm~\ref{alg:original-bddc}, except possibly for multiplicity of
eigenvalue equal to one.
\end{theorem}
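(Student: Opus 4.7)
The plan is to handle the two equivalences separately, using the Multispace--BDDC representations already established.

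First, I would show that Algorithms~\ref{alg:bddc-elim-int} and~\ref{alg:bddc-elim-int-2} yield literally the \emph{same} operator $\widehat B : \widehat W' \to \widehat W$, which of course implies identical spectra of $\widehat B A_{\widehat W}$. Given $r \in \widehat W'$, the single variational problem (\ref{eq:bddc-elim-int-alg-1}) is posed on $\widetilde W_\Gamma$, whereas (\ref{eq:discr-harm-subs-corr})--(\ref{eq:discr-harm-coarse-corr}) is posed on its two summands. Because the decomposition (\ref{eq:dharm-dec-interface}) is $a$-orthogonal, the bilinear form in (\ref{eq:bddc-elim-int-alg-1}) decouples on $\widetilde W_{\Gamma\Delta}$ and $\widetilde W_\Pi$; writing $w_\Gamma = w_{\Gamma\Delta} + w_\Pi$ and testing against $z_{\Gamma\Delta} \in \widetilde W_{\Gamma\Delta}$ and $z_{\Gamma\Pi} \in \widetilde W_\Pi$ separately gives exactly (\ref{eq:discr-harm-subs-corr})--(\ref{eq:discr-harm-coarse-corr}). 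Since $E_\Gamma$ is common and linear, the outputs $E_\Gamma w_\Gamma$ and $E_\Gamma(w_{\Gamma\Delta} + w_\Pi)$ coincide.

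Second, I would compare the preconditioner $B$ of Algorithm~\ref{alg:original-bddc} on $U$ with $\widehat B$ on $\widehat W$, using the decomposition $U = U_I \oplus_a \widehat W$ from (\ref{eq:int-harm-dec}). The key claim is that the operator $BA$ leaves this decomposition invariant, acts as the identity on $U_I$, and coincides with $\widehat B A_{\widehat W}$ on $\widehat W$. Once this is established, the spectrum of $BA$ consists of an eigenvalue $1$ on the $\dim(U_I)$-dimensional subspace $U_I$ together with the eigenvalues of $\widehat B A_{\widehat W}$, which is exactly the claim of the theorem.

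To verify this claim, I exploit Proposition~\ref{prop:original-bddc-as-multispace}, which represents $B$ as the Multispace BDDC with $V_1 = U_I$, $V_2 = (I-P)\widetilde W_\Delta = \widetilde W_{\Gamma\Delta}$, $V_3 = \widetilde W_\Pi$, and $Q_1 = I$, $Q_2 = Q_3 = (I-P)E = E_\Gamma$. For $u \in U_I$ and $r = Au$, the $V_1$ subproblem in (\ref{def:abs-mult-BDDC}) returns $v_1 = u$ since it is exact solution on $U_I$; the $V_2$ and $V_3$ subproblems return $v_2 = v_3 = 0$ because their right-hand sides $\langle Au, E_\Gamma z \rangle = a(u, E_\Gamma z)$ vanish by $a$-orthogonality of $U_I$ with $(I-P)W$. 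Hence $BAu = u$. For $u \in \widehat W$ and $r = Au$, symmetrically $v_1 = 0$, and the $V_2$, $V_3$ subproblems of Algorithm~\ref{alg:original-bddc} become \emph{exactly} the problems (\ref{eq:discr-harm-subs-corr})--(\ref{eq:discr-harm-coarse-corr}) driven by $r = Au$, so that $BAu = E_\Gamma(w_{\Gamma\Delta} + w_\Pi) = \widehat B A_{\widehat W} u$.

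The only real obstacle is bookkeeping: one must check carefully that the test-function sides on $V_2 = (I-P)\widetilde W_\Delta$ and on $\widetilde W_{\Gamma\Delta}$ coincide (they do, since the two spaces are equal and $Q_2 = E_\Gamma$), and that all cross-terms between $U_I$ and $\widehat W$ vanish by $a$-orthogonality. No new estimate is needed; the argument is purely algebraic once the Multispace--BDDC identification of Proposition~\ref{prop:original-bddc-as-multispace} is in hand.
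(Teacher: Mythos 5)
Your proposal is correct and follows essentially the same route as the paper: the first equivalence comes from decoupling the single variational problem on $\widetilde{W}_{\Gamma}$ over the $a$-orthogonal summands $\widetilde{W}_{\Gamma\Delta}\oplus\widetilde{W}_{\Pi}$, and the second from the block-diagonality of the preconditioned operator with respect to $U=U_{I}\oplus_{a}\widehat{W}$, with the identity block on $U_{I}$ and the reduced preconditioned operator on $\widehat{W}$. The only cosmetic difference is that you compute the action of $BA$ on each invariant subspace directly via the Multispace representation of Proposition~\ref{prop:original-bddc-as-multispace}, whereas the paper establishes the block-diagonal forms of $M$ and $A$ separately by tracing the algorithm's response to residuals $r_{I}$ and $r_{\Gamma}$; both arguments rest on the same orthogonality facts.
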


\begin{proof}
From the decomposition (\ref{eq:dharm-dec-interface}), we can write any
$w\in\widetilde{W}_{\Gamma}$\ uniquely as $w=w_{\Delta}+w_{\Pi}$ for some
$w_{\Delta}\in\widetilde{W}_{\Gamma\Delta}$ and $w_{\Pi}\in\widetilde{W}_{\Pi
}$, so the preconditioned operators from Algorithms \ref{alg:bddc-elim-int}
and \ref{alg:bddc-elim-int-2}\ are spectrally equivalent and we need only to
show their spectral equivalence to the preconditioned operator from
Algorithm~\ref{alg:original-bddc}. First, we note that the operator
$A:U\mapsto U^{\prime}$ defined by~(\ref{eq:def-A}), and given in the block
form as%
\[
A=\left[
\begin{array}
[c]{cc}%
\mathcal{A}_{II} & \mathcal{A}_{I\Gamma}\\
\mathcal{A}_{\Gamma I} & \mathcal{A}_{\Gamma\Gamma}%
\end{array}
\right]  ,
\]
with blocks
\begin{align*}
\mathcal{A}_{II}  &  :U_{I}\rightarrow U_{I}^{\prime},\quad\mathcal{A}%
_{I\Gamma}:U_{I}\rightarrow\widehat{W}^{\prime},\\
\mathcal{A}_{\Gamma I}  &  :\widehat{W}\rightarrow U_{I}^{\prime}%
,\quad\mathcal{A}_{\Gamma\Gamma}:\widehat{W}\rightarrow\widehat{W}^{\prime},
\end{align*}
is block diagonal and $\mathcal{A}_{\Gamma I}=\mathcal{A}_{I\Gamma}=0$ for any
$u\in U$, written as $u=u_{I}+\widehat{w}$, because $U_{I}\perp_{a}\widehat
{W}$. Next, we note that the block $\mathcal{A}_{\Gamma\Gamma}:\widehat
{W}^{\prime}\rightarrow\widehat{W}$ is the Schur complement operator
corresponding to the form $s$. Finally, since the block $\mathcal{A}_{II}$\ is
used only in the preprocessing step, the preconditioned\ operator from
Algorithms~\ref{alg:bddc-elim-int} and~\ref{alg:bddc-elim-int-2} is simply
$M_{\Gamma\Gamma}\mathcal{A}_{\Gamma\Gamma}:r\in\widehat{W}^{\prime
}\rightarrow$ $u\in\widehat{W}.$

Let us now turn to Algorithm \ref{alg:original-bddc}. Let the residual $r\in
U$ be written as $r=r_{I}+r_{\Gamma}$, where $r_{I}\in U_{I}^{\prime}$ and
$r_{\Gamma}\in\widehat{W}^{\prime}$. Taking $r_{\Gamma}=0$, we get $r=r_{I}$,
and it follows that $r_{B}=u_{B}=v_{I}=0$, so $u=u_{I}$. On the other hand,
taking $r=r_{\Gamma}$ gives $u_{I}=0$, $r_{B}=r_{\Gamma}$, $v_{I}=Pu_{B}$\ and
finally $u=\left(  I-P\right)  E(w_{\Delta}+w_{\Pi})$, so $u\in\widehat{W}$.
This shows that the off-diagonal blocks of the preconditioner $M$ are zero,
and therefore it is block diagonal
\[
M=\left[
\begin{array}
[c]{cc}%
M_{II} & 0\\
0 & M_{\Gamma\Gamma}%
\end{array}
\right]  .
\]
Next, let us take $u=u_{I},$ and consider $r_{\Gamma}=0$. The
algorithm\ returns $r_{B}=u_{B}=v_{I}=0$, and finally $u=u_{I}$. This means
that $M_{II}\mathcal{A}_{II}u_{I}=u_{I},$ so $M_{II}=\mathcal{A}_{II}^{-1}$.
The operator $A:U\rightarrow U^{\prime}$, and the block preconditioned
operator $MA:r\in U^{\prime}\rightarrow u\in U$ from Algorithm
\ref{alg:original-bddc} can be written, respectively, as%
\[
A=\left[
\begin{array}
[c]{cc}%
\mathcal{A}_{II} & 0\\
0 & \mathcal{A}_{\Gamma\Gamma}%
\end{array}
\right]  ,\quad MA=\left[
\begin{array}
[c]{cc}%
I & 0\\
0 & M_{\Gamma\Gamma}\mathcal{A}_{\Gamma\Gamma}%
\end{array}
\right]  ,
\]
where the right lower block $M_{\Gamma\Gamma}\mathcal{A}_{\Gamma\Gamma}%
:r\in\widehat{W}^{\prime}\rightarrow$ $u\in\widehat{W}$ is exactly the same as
the preconditioned operator from Algorithms~\ref{alg:bddc-elim-int}
and~\ref{alg:bddc-elim-int-2}.
\end{proof}

The BDDC condition number estimate is well known from~\cite{Mandel-2003-CBD}.
Following Theorem \ref{thm:equiv} and Corollary~\ref{cor:multispace-bddc}, we
only need to estimate $\left\Vert \left(  I-P\right)  Ew\right\Vert _{a}$ on
$\widetilde{W}$.

\begin{theorem}
[\cite{Mandel-2003-CBD}]\label{thm:element-bound}The condition number of the
original BDDC algorithm satisfies ${\kappa\leq\omega}$, where%
\begin{equation}
\omega=\max\left\{  {\sup_{w\in\widetilde{W}}\frac{\left\Vert \left(
I-P\right)  Ew\right\Vert _{a}^{2}}{\left\Vert w\right\Vert _{a}^{2}}%
,1}\right\}  \text{ }\leq C\left(  1+\log\frac{H}{h}\right)  ^{2}{.}
\label{eq:element-bound}%
\end{equation}

\end{theorem}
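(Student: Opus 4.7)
The plan is to piece together the equivalence and Multispace frameworks already established, and then reduce the whole problem to a single, well-known estimate on $\widetilde{W}$. First I would invoke Theorem~\ref{thm:equiv} to replace the original three-stage BDDC preconditioner by any of the interchangeable Multispace formulations; since eigenvalues coincide up to multiplicity at $1$, the condition number is the same, and bounding it from above through any of these versions is legitimate. The cleanest route is through Proposition~\ref{prop:original-bddc-as-multispace}, which identifies Algorithm~\ref{alg:original-bddc} with the three-space realization $V_{1}=U_{I}$, $V_{2}=(I-P)\widetilde{W}_{\Delta}$, $V_{3}=\widetilde{W}_{\Pi}$, $Q_{1}=I$, $Q_{2}=Q_{3}=(I-P)E$, and verifies that the assumptions of Corollary~\ref{cor:multispace-bddc} hold.

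Granted this, Corollary~\ref{cor:multispace-bddc} immediately yields
\[
\kappa\leq\max_{k=1,2,3}\sup_{v_{k}\in V_{k}}\frac{\left\Vert Q_{k}v_{k}\right\Vert _{a}^{2}}{\left\Vert v_{k}\right\Vert _{a}^{2}}.
\]
For $k=1$ the operator is the identity, so the ratio equals $1$. For $k=2$ and $k=3$ we have $Q_{k}=(I-P)E$; since $V_{2}=(I-P)\widetilde{W}_{\Delta}\subset\widetilde{W}$ and $V_{3}=\widetilde{W}_{\Pi}\subset\widetilde{W}$, both suprema are dominated by $\sup_{w\in\widetilde{W}}\|(I-P)Ew\|_{a}^{2}/\|w\|_{a}^{2}$. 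Taking the maximum with $1$ gives exactly the definition of $\omega$ in the statement, establishing $\kappa\leq\omega$.

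What remains is the polylogarithmic estimate $\omega\leq C(1+\log(H/h))^{2}$, which is the genuinely analytic part and the main obstacle. The strategy I would follow is standard in substructuring theory: fix $w\in\widetilde{W}$, write $Ew$ as an average of substructure traces (using the partition-of-unity weights hidden in $E$), and then control the increment $Ew-w$ on each substructure by the jumps of $w$ across faces, edges, and corners of $\partial\Omega^{s}$. Because $I-P$ is the $a$-orthogonal projection onto discrete harmonic functions, $\|(I-P)Ew\|_{a}^{2}\leq\|Ew\|_{a}^{2}$, which reduces the bound to estimating $\|Ew\|_{a}$ substructure by substructure by the local energy of $w$.

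The technical heart is then to apply, on each subdomain, the face lemma and edge lemma for discrete harmonic $H^{1/2}$-type seminorms to trade the jump of $w$ on a face (or edge) against the local energy, both decorated with the factor $(1+\log(H/h))^{2}$. Summing these contributions over all substructures and using that the chosen coarse degrees of freedom (corners plus appropriate averages) pin down enough modes so that the local Poincaré-type constants remain bounded yields the desired polylogarithmic bound. I expect that carrying out these face/edge estimates carefully---and in particular verifying that the coarse space is rich enough for the hypothesis (\ref{eq:pos-def}) to translate into a uniform local inequality---is where the real work lies; by contrast, the structural reduction in the first two paragraphs is essentially formal once Proposition~\ref{prop:original-bddc-as-multispace} and Theorem~\ref{thm:equiv} are in hand.
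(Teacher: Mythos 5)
Your first two paragraphs are exactly the paper's route: the theorem is prefaced by ``Following Theorem~\ref{thm:equiv} and Corollary~\ref{cor:multispace-bddc}, we only need to estimate $\left\Vert (I-P)Ew\right\Vert_{a}$ on $\widetilde{W}$,'' and Proposition~\ref{prop:original-bddc-as-multispace} supplies the three-space identification with $Q_{1}=I$ (ratio $1$) and $Q_{2}=Q_{3}=(I-P)E$ on subspaces of $\widetilde{W}$, so the reduction $\kappa\leq\omega$ is correct and essentially verbatim. The point to be aware of is that the paper does \emph{not} prove the polylogarithmic estimate at all: Theorem~\ref{thm:element-bound} is imported from \cite{Mandel-2003-CBD}, and the only new content supplied locally is the remark that the supremum over $\widetilde{W}$ coincides with the supremum over the discrete harmonic subspace $(I-P)\widetilde{W}$ used in that reference (via $(I-P)E(I-P)=(I-P)E$ and $\left\Vert (I-P)w\right\Vert_{a}\leq\left\Vert w\right\Vert_{a}$). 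Your third and fourth paragraphs sketch the standard face/edge-lemma argument that underlies the cited bound; the sketch points at the right tools, but as written it is an outline rather than a proof --- the jump estimates, the role of the weights in $E$, and the dependence on the choice of coarse degrees of freedom are precisely the content of \cite{Mandel-2003-CBD} and are not established here. Relative to the paper, then, your attempt is complete for everything the paper actually argues; if you intend the analytic part to stand on its own rather than by citation, it still needs to be carried out.
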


\begin{remark}
In \cite{Mandel-2003-CBD}, the theorem was formulated by taking the supremum
over the space of discrete harmonic functions $(I-P)\widetilde{W}$. However,
the supremum remains the same by taking the larger space $\widetilde{W}%
\supset(I-P)\widetilde{W}$, since%
\[
{\frac{\left\Vert \left(  I-P\right)  Ew\right\Vert _{a}^{2}}{\left\Vert
w\right\Vert _{a}^{2}}\leq\frac{\left\Vert \left(  I-P\right)  E\left(
I-P\right)  w\right\Vert _{a}^{2}}{\left\Vert \left(  I-P\right)  w\right\Vert
_{a}^{2}}}%
\]
from $E\left(  I-P\right)  =E$, which follows from (\ref{eq:int-unchanged}),
and from $\left\Vert w\right\Vert _{a}\geq\left\Vert \left(  I-P\right)
w\right\Vert _{a}$, which follows from the $a$-orthogonality of the projection
$P$.
\end{remark}

Before proceeding into the Multilevel BDDC section, let us write concisely the
spaces and operators involved in the two-level preconditioner as%
\[
U_{I}{\textstyle%
\genfrac{}{}{0pt}{}{\genfrac{}{}{0pt}{}{P}{\leftarrow}}{\subset}%
}U{\textstyle%
\genfrac{}{}{0pt}{}{\genfrac{}{}{0pt}{}{E}{\leftarrow}}{\subset}%
}\widetilde{W}_{\Delta}\oplus\widetilde{W}_{\Pi}=\widetilde{W}\subset W.
\]
We are now ready to extend this decomposition into the multilevel case.

\section{Multilevel BDDC and an Abstract Bound}

\label{sec:multilevel-bddc}

In this section, we generalize the two-level BDDC preconditioner to multiple
levels, using the abstract Multispace BDDC framework from
Algorithm~\ref{alg:multispace-bddc}. The substructuring components from
Section~\ref{sec:one-level} will be denoted by an additional subscript~$_{1},$
as~$\Omega_{1}^{s},$ $s=1,\ldots N_{1}$, etc., and called level~$1$. The
level~$1$ coarse problem (\ref{eq:coarse}) will be called the level~$2$
problem. It has the same finite element structure as the original problem
(\ref{eq:problem}) on level~$1$, so we put $U_{2}=\widetilde{W}_{\Pi1}$.
Level~$1 $ substructures are level~$2$ elements and level $1$ coarse degrees
of freedom are level~$2$ degrees of freedom. Repeating this process
recursively, level~$i-1$ substructures become level~$i$ elements, and the
level~$i$ substructures are agglomerates of level~$i$ elements. Level~$i$
substructures are denoted by $\Omega_{i}^{s},$ $s=1,\ldots,N_{i},$ and they
are assumed to form a conforming triangulation with a characteristic
substructure size $H_{i}$. For convenience, we denote by $\Omega_{0}^{s}$ the
original finite elements and put $H_{0}=h$. The interface$~\Gamma_{i}$\ on
level$~i$ is defined as the union of all level$~i$ boundary nodes, i.e., nodes
shared by at least two level$~i$ substructures, and we note that $\Gamma
_{i}\subset\Gamma_{i-1}$. Level $i-1$ coarse degrees of freedom become level
$i$ degrees of freedom. The shape functions on level~$i$ are determined by
minimization of energy with respect to level~$i-1$ shape functions, subject to
the value of exactly one level $i$ degree of freedom being one and others
level $i$ degrees of freedom being zero. The minimization is done on each
level $i$ element (level $i-1$ substructure) separately, so the values of
level $i-1$ degrees of freedom are in general discontinuous between level
$i-1$ substructures, and only the values of level $i$\ degrees of freedom
between neighboring level~$i$\ elements coincide.

The development of the spaces on level $i$ now parallels the finite element
setting in Section \ref{sec:FE-setting}. Denote $U_{i}=\widetilde{W}_{\Pi
i-1}$. Let $W_{i}^{s}$ be the space of functions\ on the substructure
$\Omega_{i}^{s}$, such that all of their degrees of freedom on $\partial
\Omega_{i}^{s}\cap\partial\Omega$ are zero, and let%
\[
W_{i}=W_{i}^{1}\times\cdots\times W_{i}^{N_{i}}.
\]
Then $U_{i}\subset W_{i}$ is the subspace of all functions from $W$ that are
continuous across the interfaces $\Gamma_{i}$. Define $U_{Ii}\subset U_{i}$ as
the subspace of functions that are zero on$~\Gamma_{i}$, i.e., the functions
\textquotedblleft interior\textquotedblright\ to the level$~i$ substructures.
Denote by $P_{i}$ the energy orthogonal projection from $W_{i} $ onto $U_{Ii}%
$,%
\[
P_{i}:w_{i}\in W_{i}\longmapsto v_{Ii}\in U_{Ii}:a\left(  v_{Ii}%
,z_{Ii}\right)  =a\left(  w_{i},z_{Ii}\right)  ,\quad\forall z_{Ii}\in
U_{Ii}.
\]
Functions from $\left(  I-P_{i}\right)  W_{i}$, i.e., from the nullspace of
$P_{i},$ are called discrete harmonic on level $i$; these functions are
$a$-orthogonal to $U_{Ii}$ and energy minimal with respect to increments in
$U_{Ii}$. Denote by $\widehat{W}_{i}\subset U_{i}$ the subspace of discrete
harmonic functions on level$~i$, that is
\begin{equation}
\widehat{W}_{i}=\left(  I-P_{i}\right)  U_{i}. \label{eq:discrete-harm-ML}%
\end{equation}
In particular, $U_{Ii}\perp_{a}\widehat{W}_{i}$. Define $\widetilde{W}%
_{i}\subset W_{i}$ as the subspace of all functions such that the values of
any coarse degrees of freedom on level$~i$ have a common value for all
relevant level$~i$\ substructures and vanish on $\partial\Omega_{i}^{s}%
\cap\partial\Omega,$ and $\widetilde{W}_{\Delta i}\subset W_{i}$ as the
subspace of all functions such that their level $i$ coarse degrees of freedom
vanish. Define $\widetilde{W}_{\Pi i}$ as the subspace of all functions such
that their level $i$\ coarse degrees of freedom between adjacent substructures
coincide, and such that their energy is minimal. Clearly, functions in
$\widetilde{W}_{\Pi i}$ are uniquely determined by the values of their level
$i$\ coarse degrees of freedom, and
\begin{equation}
\widetilde{W}_{\Delta i}\perp_{a}\widetilde{W}_{\Pi i},\text{\quad}%
\widetilde{W}_{i}=\widetilde{W}_{\Delta i}\oplus\widetilde{W}_{\Pi i}.
\label{eq:tilde-dec-ML}%
\end{equation}
We assume that the level$~i$\ coarse degrees of freedom are zero on all
functions from $U_{Ii}$, that is,%
\begin{equation}
U_{Ii}\subset\widetilde{W}_{\Delta i}. \label{eq:coarse-int-ML}%
\end{equation}
In other words, level $i$ coarse degrees of freedom depend on the values on
level$~i$ substructure boundaries only. From (\ref{eq:tilde-dec-ML}) and
(\ref{eq:coarse-int-ML}), it follows that the functions in $\widetilde{W}_{\Pi
i}$ are discrete harmonic on level$~i$, that is%
\begin{equation}
\widetilde{W}_{\Pi i}=\left(  I-P_{i}\right)  \widetilde{W}_{\Pi i}.
\label{eq:coarse-is-discrete-harmonic-ML}%
\end{equation}
Let $E$ be a projection from $\widetilde{W}_{i}$ onto $U_{i}$, defined by
taking some weighted average on $\Gamma_{i}$%
\[
E_{i}:\widetilde{W}_{i}\rightarrow U_{i},\quad E_{i}U_{Ii}=U_{Ii},\quad
E_{i}^{2}=E_{i}.
\]
Since projection is the identity on its range,\ $E_{i}$ does not change the
level$~i$ interior degrees of freedom, in particular%
\begin{equation}
E_{i}U_{Ii}=U_{Ii}. \label{eq:int-unchanged-ML}%
\end{equation}

Finally, we introduce an interpolation $I_{i}:U_{i}\rightarrow\widetilde
{U}_{i}$ from level~$i$ degrees of freedom to functions in some classical
finite element space $\widetilde{U}_{i}$ with the same degrees of freedom as
$U_{i}$. The space $\widetilde{U}_{i}$ will be used for comparison purposes,
to invoke known inequalities for finite elements. A more detailed description
of the properties of $I_{i}$\ and the spaces $\widetilde{U}_{i}$\ is postponed
to the next section.

The hierarchy of spaces and operators is shown concisely in
Figure~\ref{fig:multi-bddc}. The Multilevel BDDC\ method is defined
recursively \cite{Dohrmann-2003-PSC,Mandel-2007-OMB} by solving the coarse
problem on level $i $ only approximately, by one application of the
preconditioner on level $i-1$. Eventually, at level, $L-1$, the coarse
problem, which is the level $L$ problem, is solved exactly. We need a more
formal description of the method here, which is provided by the following algorithm.

\begin{figure}[ptb]
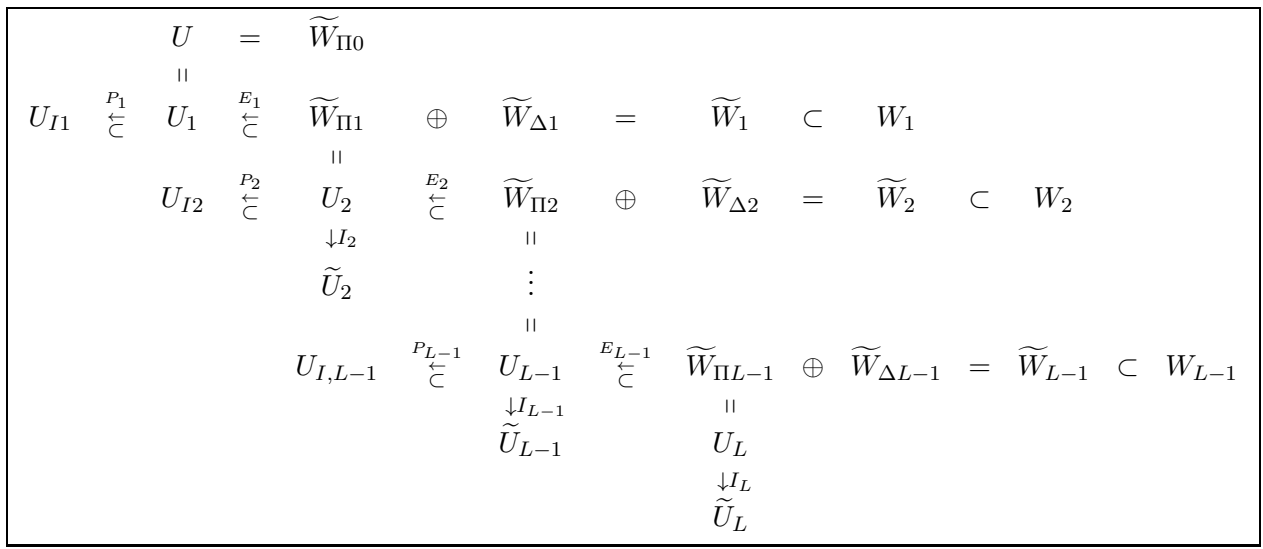

\medskip%
\[
\fbox{$%
\begin{array}
[c]{ccccccccccccccc}
&  & U & = & \widetilde{W}_{\Pi0} &  &  &  &  &  &  &  &  &  & \\
&  & \shortparallel &  &  &  &  &  &  &  &  &  &  &  & \\
U_{I1} & {\genfrac{}{}{0pt}{}{\genfrac{}{}{0pt}{}{P_{1}}{\leftarrow}}{\subset
}} & U_{1} & {\genfrac{}{}{0pt}{}{\genfrac{}{}{0pt}{}{E_{1}}{\leftarrow
}}{\subset}} & \widetilde{W}_{\Pi1} & \oplus & \widetilde{W}_{\Delta1} & = &
\widetilde{W}_{1} & \subset & W_{1} &  &  &  & \\
&  &  &  & \shortparallel &  &  &  &  &  &  &  &  &  & \\
&  & U_{I2} & {\genfrac{}{}{0pt}{}{\genfrac{}{}{0pt}{}{P_{2}}{\leftarrow
}}{\subset}} & U_{2} & {\genfrac{}{}{0pt}{}{\genfrac{}{}{0pt}{}{E_{2}%
}{\leftarrow}}{\subset}} & \widetilde{W}_{\Pi2} & \oplus & \widetilde
{W}_{\Delta2} & = & \widetilde{W}_{2} & \subset & W_{2} &  & \\
&  &  &  & {\scriptstyle\ \downarrow I_{2}} &  & \shortparallel &  &  &  &  &
&  &  & \\
&  &  &  & \widetilde{U}_{2} &  & \vdots &  &  &  &  &  &  &  & \\
&  &  &  &  &  & \shortparallel &  &  &  &  &  &  &  & \\
&  &  &  & U_{I,L-1} & {\genfrac{}{}{0pt}{}{\genfrac{}{}{0pt}{}{P_{L-1}%
}{\leftarrow}}{\subset}} & U_{L-1} &
{\genfrac{}{}{0pt}{}{\genfrac{}{}{0pt}{}{E_{L-1}}{\leftarrow}}{\subset}} &
\widetilde{W}_{\Pi L-1} & \oplus & \widetilde{W}_{\Delta L-1} & = &
\widetilde{W}_{L-1} & \subset & W_{L-1}\\
&  &  &  &  &  & {\scriptstyle\ \downarrow I_{L-1}} &  & \shortparallel &  &
&  &  &  & \\
&  &  &  &  &  & \widetilde{U}_{L-1} &  & U_{L} &  &  &  &  &  & \\
&  &  &  &  &  &  &  & {\scriptstyle\ \downarrow I_{L}} &  &  &  &  &  & \\
&  &  &  &  &  &  &  & \widetilde{U}_{L} &  &  &  &  &  &
\end{array}
$}%
\]
\medskip\caption{Space decompositions, embeddings and projections in the
Multilevel BDDC}%
\label{fig:multi-bddc}%
\end{figure}

\begin{algorithm}
[Multilevel BDDC]\label{alg:multilevel-bddc}Define the preconditioner
$r_{1}\in U_{1}^{\prime}\longmapsto u_{1}\in U_{1}$ as follows:

\textbf{for }$i=1,\ldots,L-1$\textbf{,}

\begin{description}
\item Compute interior pre-correction on level $i$,%
\begin{equation}
u_{Ii}\in U_{Ii}:a\left(  u_{Ii},z_{Ii}\right)  =\left\langle r_{i}%
,z_{Ii}\right\rangle ,\quad\forall z_{Ii}\in U_{Ii}. \label{eq:ML-uIi}%
\end{equation}

\item Get updated residual on level $i$,%
\begin{equation}
r_{Bi}\in U_{i},\quad\left\langle r_{Bi},v_{i}\right\rangle =\left\langle
r_{i},v_{i}\right\rangle -a\left(  u_{Ii},v_{i}\right)  ,\quad\forall v_{i}\in
U_{i}. \label{eq:ML-rBi}%
\end{equation}

\item Find the substructure correction on level $i$:
\begin{equation}
w_{\Delta i}\in W_{\Delta i}:a\left(  w_{\Delta i},z_{\Delta i}\right)
=\left\langle r_{Bi},E_{i}z_{\Delta i}\right\rangle ,\quad\forall z_{\Delta
i}\in W_{\Delta i}. \label{eq:ML-wDi}%
\end{equation}

\item Formulate the coarse problem on level $i$,
\begin{equation}
w_{\Pi i}\in W_{\Pi i}:a\left(  w_{\Pi i},z_{\Pi i}\right)  =\left\langle
r_{Bi},E_{i}z_{\Pi i}\right\rangle ,\quad\forall z_{\Pi i}\in W_{\Pi i},
\label{eq:ML-coarse}%
\end{equation}

\item If\ $\ i=L-1$, solve the coarse problem directly and set $u_{L}=w_{\Pi
L-1}$, \newline otherwise set up the right-hand side for level $i+1$,%
\begin{equation}
r_{i+1}\in\widetilde{W}_{\Pi i}^{\prime},\quad\left\langle r_{i+1}%
,z_{i+1}\right\rangle =\left\langle r_{Bi},E_{i}z_{i+1}\right\rangle
,\quad\forall z_{i+1}\in\widetilde{W}_{\Pi i}=U_{i+1}, \label{eq:ML-ri+1}%
\end{equation}

\end{description}

\textbf{end.}

\textbf{for }$i=L-1,\ldots,1\mathbf{,}$\textbf{\ }

\begin{description}
\item Average the approximate corrections on substructure interfaces on level
$i$,%
\begin{equation}
u_{Bi}=E_{i}\left(  w_{\Delta i}+u_{i+1}\right)  . \label{eq:ML-uBi-1}%
\end{equation}

\item Compute the interior post-correction on level $i$,%
\begin{equation}
v_{Ii}\in U_{Ii}:a\left(  v_{Ii},z_{Ii}\right)  =a\left(  u_{Bi}%
,z_{Ii}\right)  ,\quad\forall z_{Ii}\in U_{Ii}. \label{eq:ML-vIi}%
\end{equation}

\item Apply the combined corrections,
\begin{equation}
u_{i}=u_{Ii}+u_{Bi}-v_{Ii}. \label{eq:ML-ui}%
\end{equation}

\end{description}

\textbf{end.}
\end{algorithm}

We can now show that the Multilevel BDDC can be cast as the Multispace BDDC on
energy orthogonal spaces, using the hierarchy of spaces from
Figure~\ref{fig:multi-bddc}.

\begin{lemma}
\label{lem:bddc-ML-spaces}The Multilevel BDDC\ preconditioner in
Algorithm~\ref{alg:multilevel-bddc} is the abstract Multispace BDDC
preconditioner from Algorithm~\ref{alg:multispace-bddc} with $M=2L-1,$ and the
spaces and operators%
\begin{align}
X  &  =U_{1},\quad V_{1}=U_{I1},\quad V_{2}=(I-P_{1})\widetilde{W}_{\Delta
1},\quad V_{3}=U_{I2},\nonumber\\
V_{4}  &  =(I-P_{2})\widetilde{W}_{\Delta2},\quad V_{5}=U_{I3},\quad
\ldots\label{eq:ML-spaces}\\
V_{2L-4}  &  =(I-P_{L-2})\widetilde{W}_{\Delta L-2},\quad V_{2L-3}%
=U_{IL-1},\nonumber\\
V_{2L-2}  &  =(I-P_{L-1})\widetilde{W}_{\Delta L-1},\quad V_{2L-1}%
=\widetilde{W}_{\Pi L-1},\nonumber
\end{align}%
\begin{align}
Q_{1}  &  =I,\quad Q_{2}=Q_{3}=\left(  I-P_{1}\right)  E_{1},\quad\nonumber\\
Q_{4}  &  =Q_{5}=\left(  I-P_{1}\right)  E_{1}\left(  I-P_{2}\right)
E_{2},\quad\ldots\label{eq:ML-operators}\\
Q_{2L-4}  &  =Q_{2L-3}=\left(  I-P_{1}\right)  E_{1}\,\cdots\,\left(
I-P_{L-2}\right)  E_{L-2},\nonumber\\
Q_{2L-2}  &  =Q_{2L-1}=\left(  I-P_{1}\right)  E_{1}\,\cdots\,\left(
I-P_{L-1}\right)  E_{L-1},\nonumber
\end{align}
and the assumptions of Corollary \ref{cor:multispace-bddc} are satisfied.
\end{lemma}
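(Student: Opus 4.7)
I will prove the lemma by induction on the number of levels $L$. The base case $L=2$ is exactly Proposition~\ref{prop:original-bddc-as-multispace}: the original two-level BDDC was already identified as Multispace BDDC with $M=3$, spaces $V_{1}=U_{I1}$, $V_{2}=(I-P_{1})\widetilde{W}_{\Delta1}$, $V_{3}=\widetilde{W}_{\Pi1}$, and operators $Q_{1}=I$, $Q_{2}=Q_{3}=(I-P_{1})E_{1}$, which matches~(\ref{eq:ML-spaces})--(\ref{eq:ML-operators}) for $L=2$. For the inductive step, observe that Algorithm~\ref{alg:multilevel-bddc} is explicitly recursive: the forward sweep at levels $i=2,\ldots,L-1$ followed by the backward sweep at levels $i=L-1,\ldots,2$ is exactly one application of the $(L-1)$-level Multilevel BDDC on the space $U_{2}=\widetilde{W}_{\Pi1}$ with input residual $r_{2}$ defined by (\ref{eq:ML-ri+1}). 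Thus the inductive hypothesis yields $u_{2}=\sum_{j=1}^{2L-3}\widetilde{Q}_{j}\widetilde{v}_{j}$, where the $\widetilde{Q}_{j}$ and the subspaces hosting $\widetilde{v}_{j}$ are those of the lemma with all level indices shifted up by one.

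\textbf{Reducing the outer level to the two-level case.} The level-1 operations of Algorithm~\ref{alg:multilevel-bddc}---interior pre-correction (\ref{eq:ML-uIi}), substructure correction (\ref{eq:ML-wDi}), averaging (\ref{eq:ML-uBi-1}), and interior post-correction (\ref{eq:ML-vIi})---have exactly the structure of the corresponding steps of Algorithm~\ref{alg:original-bddc}, with the recursive output $u_{2}$ now playing the role of the coarse correction $w_{\Pi}$. Repeating the manipulations from the proof of Proposition~\ref{prop:original-bddc-as-multispace} verbatim gives
\[
u_{1} = u_{I1} + (I-P_{1})E_{1}w_{\Delta1} + (I-P_{1})E_{1}u_{2},
\]
with $u_{I1}$ and $w_{\Delta1}$ identified with $v_{1}$ on $V_{1}=U_{I1}$ (via $Q_{1}=I$) and $v_{2}$ on $V_{2}=(I-P_{1})\widetilde{W}_{\Delta1}$ (via $Q_{2}=(I-P_{1})E_{1}$). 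Distributing the outer $(I-P_{1})E_{1}$ through $u_{2}$ and reindexing, the inner operators become $Q_{j+2}=(I-P_{1})E_{1}\widetilde{Q}_{j}$, which is precisely the telescoping form recorded in~(\ref{eq:ML-operators}). The equality of the corresponding variational problems $\widetilde{v}_{j}=v_{j+2}$ follows from~(\ref{eq:ML-ri+1}): the pairing $\langle r_{2},z\rangle=\langle r_{B1},E_{1}z\rangle$ is exactly the Multispace BDDC coupling induced by composing $Q_{k}$ with the level-1 projection.

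\textbf{Verifying the hypotheses of Corollary~\ref{cor:multispace-bddc}.} Energy orthogonality of the $V_{k}$ follows by splicing the level-$i$ orthogonal decompositions $U_{i}=U_{Ii}\oplus\widehat{W}_{i}$ and $(I-P_{i})\widetilde{W}_{i}=(I-P_{i})\widetilde{W}_{\Delta i}\oplus\widetilde{W}_{\Pi i}$ from (\ref{eq:tilde-dec-ML})--(\ref{eq:coarse-is-discrete-harmonic-ML}) with the identification $\widetilde{W}_{\Pi i}=U_{i+1}$; the inductive hypothesis supplies the orthogonalities at levels $\geq 2$, while the level-1 orthogonalities of Proposition~\ref{prop:original-bddc-as-multispace} glue the deeper $V_{k}$ (all contained in $U_{2}\subset\widetilde{W}_{\Pi1}$) to $V_{1}$ and $V_{2}$. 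Positive definiteness of $a$ on each $V_{k}$ is inherited from the standing assumption of positive definiteness on each $\widetilde{W}_{i}$. Each composed $Q_{k}=(I-P_{1})E_{1}\cdots(I-P_{i-1})E_{i-1}$ is a projection by iterating identity~(\ref{eq:(I-P)E}) and checking that consecutive factors $(I-P_{j})E_{j}$ and $(I-P_{j+1})E_{j+1}$ compose compatibly on the relevant nested subspaces. The decomposition of unity~(\ref{eq:dec-unity}) is proved by decomposing any $u\in U_{1}$ into its level-1 interior, substructure, and coarse parts as in Proposition~\ref{prop:original-bddc-as-multispace}, then recursively decomposing the coarse piece (living in $U_{2}$) by the inductive hypothesis. \emph{The main obstacle} is precisely this last bookkeeping: tracking that each composed $Q_{k}$ lands in a subspace which remains $a$-orthogonal to all the others across nested levels, and that the commutations of successive $(I-P_{j})E_{j}$ factors hold on every range where they are invoked.
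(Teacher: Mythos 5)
Your overall architecture -- induction on $L$, with the base case given by Proposition~\ref{prop:original-bddc-as-multispace} and the inductive step wrapping the level-$1$ operations around the $(L-1)$-level preconditioner acting on $U_{2}=\widetilde{W}_{\Pi 1}$ -- is sound, and it is really the same computation the paper performs in unrolled form: the paper's repeated substitutions $\left\langle r_{i},z\right\rangle =\left\langle r_{B,i-1},E_{i-1}z\right\rangle =\left\langle r_{i-1},\left(I-P_{i-1}\right)E_{i-1}z\right\rangle$ are exactly your one-step identity $\left\langle r_{2},w\right\rangle =\left\langle r_{1},\left(I-P_{1}\right)E_{1}w\right\rangle$ iterated down the levels, and the paper's final verification of orthogonality and of the decomposition of unity is likewise an induction over levels. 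So the identification of the $v_{k}$ with $u_{Ii}$, $w_{\Delta i}$, $w_{\Pi L-1}$ and the telescoping form of the $Q_{k}$ are handled correctly in your plan, modulo the fact that several steps are asserted ("repeating the manipulations verbatim") rather than carried out -- in particular the analogue of $P_{i}w_{\Delta i}=0$, which is what places $w_{\Delta i}$ in $V_{2i}=(I-P_{i})\widetilde{W}_{\Delta i}$ and which the paper proves separately at each level.

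The one genuine gap is the claim that each composed operator $Q_{k}=\left(I-P_{1}\right)E_{1}\cdots\left(I-P_{i}\right)E_{i}$ is a projection "by iterating identity~(\ref{eq:(I-P)E})." Iterating (\ref{eq:(I-P)E}) only shows that each individual factor $\left(I-P_{j}\right)E_{j}$ is idempotent; it does not let you square the product, because in
\[
\left[\left(I-P_{1}\right)E_{1}\cdots\left(I-P_{i}\right)E_{i}\right]^{2}
\]
the trailing factors of the first copy must be moved past the leading factors of the second, and operators at different levels do not commute for free (nor does the composition even act within a single subspace unless all factors are read as operators on the common ambient space $W_{1}$ -- your inductive framing, in which $\widetilde{Q}_{j}$ has domain inside $W_{2}$, makes $Q_{j+2}^{2}$ ill-typed as written). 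The paper supplies the missing ingredient as the separate commutation identity (\ref{eq:DH-unchanged-ML}), namely $(I-P_{j})E_{j}(I-P_{i})E_{i}(I-P_{j})E_{j}=(I-P_{i})E_{i}(I-P_{j})E_{j}$ for $j<i$, justified by the nesting $\Gamma_{i}\subset\Gamma_{j}$ of the interfaces: level-$i$ quantities depend only on degrees of freedom on $\Gamma_{i}$, which the coarser-level averaging does not disturb. You flag this as "the main obstacle" but do not resolve it, and the resolution requires this structural fact about the hierarchy of coarse degrees of freedom, not just the two-level identity (\ref{eq:(I-P)E}). A smaller point of the same kind: positive definiteness of $a$ on each $V_{k}$ is not literally "inherited from the standing assumption," since (\ref{eq:pos-def}) is stated only for $\widetilde{W}=\widetilde{W}_{1}$; one needs its level-$i$ analogue on each $\widetilde{W}_{i}$, which holds because the level-$i$ spaces are realized inside $\widetilde{W}_{1}$ by energy-minimal extension, but this should be said.
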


\begin{proof}
Let $r_{1}\in U_{1}^{\prime}$. Define the vectors $v_{k},$\ $k=1,\ldots
,2L-1$\ by (\ref{def:abs-mult-BDDC}) with the spaces and operators given by
(\ref{eq:ML-spaces})-(\ref{eq:ML-operators}), and let $u_{Ii}$, $r_{Bi}$,
$w_{\Delta i}$, $w_{\Pi i}$, $r_{i+1}$, $u_{Bi}$, $v_{Ii}$, and $u_{i}$ be the
quantities in Algorithm~\ref{alg:multilevel-bddc}, defined by (\ref{eq:ML-uIi}%
)-(\ref{eq:ML-ui}).

First, with $V_{1}=U_{I1}$, the definition of $v_{1}$\ in
(\ref{def:abs-mult-BDDC}) is (\ref{eq:ML-uIi}) with $i=1$ and $u_{I1}=v_{1}$.
We show that in general, for level $i=1,\ldots,L-1$, and space $k=2i-1,$\ we
get (\ref{def:abs-mult-BDDC}) with $V_{k}=U_{Ii}$, \ so that $v_{k}=u_{Ii}$
and in particular $v_{2L-3}=u_{IL-1}$. So, let $z_{Ii}\in U_{Ii}$,
$i=2,\ldots,L-1,$ be arbitrary. From (\ref{eq:ML-uIi}) using (\ref{eq:ML-ri+1}%
) and (\ref{eq:ML-rBi}),
\begin{align}
a(u_{Ii},z_{Ii})  &  =\left\langle r_{i},z_{Ii}\right\rangle =\left\langle
r_{Bi-1},E_{i-1}z_{Ii}\right\rangle =\label{eq:PF-uIi}\\
&  =\left\langle r_{i-1},E_{i-1}z_{Ii}\right\rangle -a\left(  u_{Ii-1}%
,E_{i-1}z_{Ii}\right)  .\nonumber
\end{align}
Since from (\ref{eq:ML-uIi}) using the fact that $P_{i-1}E_{i-1}z_{Ii}\in
U_{Ii-1}$ it follows that%
\[
\left\langle r_{i-1},P_{i-1}E_{i-1}z_{Ii}\right\rangle -a\left(
u_{Ii-1},P_{i-1}E_{i-1}z_{Ii}\right)  =0,
\]
we get from (\ref{eq:PF-uIi}),%
\[
a(u_{Ii},z_{Ii})=\left\langle r_{i-1},\left(  I-P_{i-1}\right)  E_{i-1}%
z_{Ii}\right\rangle -a\left(  u_{Ii-1},(I-P_{i-1})E_{i-1}z_{Ii}\right)  ,
\]
and because $a\left(  u_{Ii-1},(I-P_{i-1})E_{i-1}z_{Ii}\right)  =0$\ by
orthogonality, we get%
\[
a(u_{Ii},z_{Ii})=\left\langle r_{i-1},\left(  I-P_{i-1}\right)  E_{i-1}%
z_{Ii}\right\rangle .
\]
Repeating this process recursively using (\ref{eq:PF-uIi}), we finally get%
\begin{align*}
a(u_{Ii},z_{Ii})  &  =\left\langle r_{i-1},\left(  I-P_{i-1}\right)
E_{i-1}z_{Ii}\right\rangle =\quad...\\
&  =\left\langle r_{1},\left(  I-P_{1}\right)  E_{1}\,\cdots\,\left(
I-P_{i-1}\right)  E_{i-1}z_{Ii}\right\rangle .
\end{align*}

Next, consider $w_{\Delta i}\in\widetilde{W}_{\Delta i}$\ defined by
(\ref{eq:ML-wDi}). We show that for $i=1,\ldots,L-1$, and $k=2i$,\ we get
(\ref{def:abs-mult-BDDC})\ with $V_{k}=\widetilde{W}_{\Delta i}$,\ so that
$v_{k}=w_{\Delta i}$ and in particular $v_{2L-2}=w_{\Delta L-1}$. So, let
$z_{\Delta i}\in\widetilde{W}_{\Delta i}$ be arbitrary.\ From (\ref{eq:ML-wDi}%
)\ using (\ref{eq:ML-rBi}),
\begin{equation}
a\left(  w_{\Delta i},z_{\Delta i}\right)  =\left\langle r_{Bi},E_{i}z_{\Delta
i}\right\rangle =\left\langle r_{i},E_{i}z_{\Delta i}\right\rangle -a\left(
u_{Ii},E_{i}z_{\Delta i}\right)  . \label{eq:PF-wDelta-i}%
\end{equation}
From the definition of $u_{Ii}$ by (\ref{eq:ML-uIi}) and since $P_{i}%
E_{i}z_{\Delta i}\in U_{Ii}$ it follows that
\[
\left\langle r_{i},P_{i}E_{i}z_{\Delta i}\right\rangle -a\left(  u_{Ii}%
,P_{i}E_{i}z_{\Delta i}\right)  =0,
\]
so (\ref{eq:PF-wDelta-i}) gives%
\[
a\left(  w_{\Delta i},z_{\Delta i}\right)  =\left\langle r_{i},\left(
I-P_{i}\right)  E_{i}z_{\Delta i}\right\rangle -a\left(  u_{Ii},\left(
I-P_{i}\right)  E_{i}z_{\Delta i}\right)  .
\]
Next, because $a\left(  u_{Ii},\left(  I-P_{i}\right)  E_{i}z_{\Delta
i}\right)  =0$ by orthogonality, and using (\ref{eq:ML-ri+1}),
\[
a\left(  w_{\Delta i},z_{\Delta i}\right)  =\left\langle r_{i},\left(
I-P_{i}\right)  E_{i}z_{\Delta i}\right\rangle =\left\langle r_{Bi-1}%
,E_{i-1}\left(  I-P_{i}\right)  E_{i}z_{\Delta i}\right\rangle .
\]
Repeating this process recursively, we finally get%
\begin{align}
a\left(  w_{\Delta i},z_{\Delta i}\right)   &  =\left\langle r_{i},\left(
I-P_{i}\right)  E_{i}z_{\Delta i}\right\rangle =\quad\ldots\nonumber\\
&  =\left\langle r_{1},\left(  I-P_{1}\right)  E_{1}\,\cdots\,\left(
I-P_{i}\right)  E_{i}z_{\Delta i}\right\rangle .\nonumber
\end{align}
To verify (\ref{def:abs-mult-BDDC}), it remains to show that $P_{i}w_{\Delta
i}=0;$ then $w_{\Delta i}\in(I-P_{i})\widetilde{W}_{\Delta i}=V_{k}$. Since
$P_{i}$ is an $a$-orthogonal projection, it holds that%
\[
a\left(  P_{i}w_{\Delta i},P_{i}w_{\Delta i}\right)  =a\left(  w_{\Delta
i},P_{i}w_{\Delta i}\right)  =\left\langle r_{Bi},E_{i}P_{i}w_{\Delta
i}\right\rangle =0,
\]
where we have used $E_{i}U_{Ii}\subset U_{Ii}$ following the assumption
(\ref{eq:int-unchanged-ML}) and the equality%
\[
\left\langle r_{Bi},z_{Ii}\right\rangle =\left\langle r_{i},z_{Ii}%
\right\rangle -a\left(  u_{Ii},z_{Ii}\right)  =0
\]
for any $z_{Ii}\in U_{Ii}$, which follows from (\ref{eq:ML-uIi}) and
(\ref{eq:ML-rBi}).

In exactly the same way, we get that if $w_{\Pi L-1}\in\widetilde{W}_{\Pi
L-1}$\ is defined by (\ref{eq:ML-coarse}), then $v_{2L-1}=w_{\Pi L-1}%
$\ satisfies (\ref{def:abs-mult-BDDC}) with $k=2L-1$.

Finally, from (\ref{eq:ML-uBi-1})-(\ref{eq:ML-ui}) for any $i=L-2,\ldots,1$,
we get
\begin{align*}
u_{i}  &  =u_{Ii}+u_{Bi}-v_{Ii}\\
&  =u_{Ii}+\left(  I-P_{i}\right)  E_{i}\left(  w_{\Delta i}+u_{i+1}\right) \\
&  =u_{Ii}+\left(  I-P_{i}\right)  E_{i}\left[  w_{\Delta i}+u_{Ii+1}+\left(
I-P_{i+1}\right)  E_{i+1}\left(  w_{\Delta i+1}+u_{i+2}\right)  \right] \\
&  =u_{Ii}+\\
&  +\left(  I-P_{i}\right)  E_{i}\left[  w_{\Delta i}+\ldots+\left(
I-P_{L-1}\right)  E_{L-1}\left(  w_{\Delta L-1}+u_{\Pi L-1}\right)  \right]  ,
\end{align*}

and, in particular for $u_{1}$,
\begin{align*}
u_{1}  &  =u_{I1}+\\
&  +\left(  I-P_{1}\right)  E_{1}\left[  w_{\Delta1}+\ldots+\left(
I-P_{L-1}\right)  E_{L-1}\left(  w_{\Delta L-1}+u_{\Pi L-1}\right)  \right] \\
&  =Q_{1}v_{1}+Q_{2}v_{2}+\ldots+Q_{2L-2}v_{2L-2}+Q_{2L-1}v_{2L-1}.
\end{align*}

It remains to verify the assumptions of Corollary~\ref{cor:multispace-bddc}.

The spaces $\widetilde{W}_{\Pi i}$ and $\widetilde{W}_{\Delta i}$, for all
$i=1,\ldots,L-1$, are $a$-orthogonal by (\ref{eq:tilde-dec-ML}) and from
(\ref{eq:coarse-int-ML}),
\[
\left(  I-P_{i}\right)  \widetilde{W}_{\Delta i}\subset\widetilde{W}_{\Delta
i},
\]
thus $\left(  I-P_{i}\right)  \widetilde{W}_{\Delta i}$ is $a$-orthogonal to
$\widetilde{W}_{\Pi i}$. Since$\ \widetilde{W}_{\Pi i}=U_{i+1}$ consists of
discrete harmonic functions on level$~i$ from
(\ref{eq:coarse-is-discrete-harmonic-ML}), and $U_{Ii+1}\subset U_{i+1}$, it
follows by induction\ that the spaces $V_{k}$, given by (\ref{eq:ML-spaces}),
are $a$-orthogonal.

We now show that the operators $Q_{k}$ defined by (\ref{eq:ML-operators}) are
projections. From our definitions, coarse degrees of freedom on substructuring
level\ $i$ (from which we construct the level\thinspace$i+1$ problem) depend
only on the values of degrees of freedom on the interface$~\Gamma_{i}$ and
$\Gamma_{j}\subset\Gamma_{i}$ for $j\geq i$. Then,%
\begin{equation}
(I-P_{j})E_{j}(I-P_{i})E_{i}(I-P_{j})E_{j}=(I-P_{i})E_{i}(I-P_{j})E_{j}.
\label{eq:DH-unchanged-ML}%
\end{equation}

Using (\ref{eq:DH-unchanged-ML}) and since $\left(  I-P_{1}\right)  E_{1}$ is
a projection by (\ref{eq:(I-P)E}), we get%
\begin{align*}
\left[  \left(  I-P_{1}\right)  E_{1}\cdots\left(  I-P_{i}\right)
E_{i}\right]  ^{2}  &  =\left(  I-P_{1}\right)  E_{1}\left(  I-P_{1}\right)
E_{1}\cdots\left(  I-P_{i}\right)  E_{i}\\
&  =\left(  I-P_{1}\right)  E_{1}\cdots\left(  I-P_{i}\right)  E_{i},
\end{align*}
so the operators $Q_{k}$ from (\ref{eq:ML-operators}) are projections.

It remains to prove the decomposition of unity (\ref{eq:dec-unity}). Let
$u_{i}\in U_{i}$, such that%
\begin{align}
u_{i}^{\prime}  &  =u_{Ii}+w_{\Delta i}+u_{i+1},\qquad\label{eq:pf-dec-u}\\
u_{Ii}  &  \in U_{Ii},\quad w_{\Delta i}\in\left(  I-P_{i}\right)
\widetilde{W}_{\Delta i},\quad u_{i+1}\in U_{i+1}%
\end{align}
and%
\begin{equation}
v_{i}=u_{Ii}+\left(  I-P_{i}\right)  E_{i}w_{\Delta i}+\left(  I-P_{i}\right)
E_{i}u_{i+1}. \label{eq:pf-dec-v}%
\end{equation}
From (\ref{eq:pf-dec-u}), $w_{\Delta i}+u_{i+1}\in U_{i}$ since $u_{i}\in
U_{i}$ and $u_{Ii}\in U_{Ii}\subset U_{i}$. Then $E_{i}\left[  w_{\Delta
i}+u_{i+1}\right]  =w_{\Delta i}+u_{i+1}$\ by (\ref{eq:int-unchanged-ML}), so%
\begin{align*}
v_{i}  &  =u_{Ii}+(I-P_{i})E_{i}\left[  w_{\Delta i}+u_{i+1}\right]
=u_{Ii}+(I-P_{i})\left[  w_{\Delta i}+u_{i+1}\right]  =\\
&  =u_{Ii}+w_{\Delta i}+u_{i+1}=u_{Ii}+w_{\Delta i}+u_{i+1}=u_{i}^{\prime},
\end{align*}
because $w_{\Delta i}$ and $u_{i+1}$ are discrete harmonic on level $i$. The
fact that $u_{i+1}$ in (\ref{eq:pf-dec-u})\ and (\ref{eq:pf-dec-v}) are the
same on arbitrary level $i$ can be proved in exactly the same way using
induction and putting $u_{i+1}$ in (\ref{eq:pf-dec-u}) as%
\begin{align*}
u_{i+1}  &  =u_{Ii+1}+\ldots+w_{\Delta L-1}+w_{\Pi L-1},\\
u_{Ii+1}  &  \in U_{Ii+1},\quad w_{\Delta L-1}\in\left(  I-P_{L-1}\right)
\widetilde{W}_{\Delta L-1},\quad w_{\Pi L-1}\in\widetilde{W}_{\Pi L-1},
\end{align*}
and in (\ref{eq:pf-dec-v}) as
\[
u_{i+1}=u_{Ii+1}+\ldots+\left(  I-P_{i+1}\right)  E_{i+1}\cdots\left(
I-P_{L-1}\right)  E_{L-1}\left(  w_{\Delta L-1}+w_{\Pi L-1}\right)  .\;
\]

\end{proof}

The following bound follows from writing of the Multilevel BDDC as Multispace
BDDC in Lemma~\ref{lem:bddc-ML-spaces}\ and the estimate for Multispace BDDC
in Corollary~\ref{cor:multispace-bddc}.

\begin{lemma}
\label{lem:bddc-ML-estimate-MLspaces}If for some $\omega\geq1$,%
\begin{align}
\left\Vert (I-P_{1})E_{1}w_{\Delta1}\right\Vert _{a}^{2}  &  \leq
\omega\left\Vert w_{\Delta1}\right\Vert _{a}^{2}\quad\forall w_{\Delta1}%
\in\left(  I-P_{1}\right)  \widetilde{W}_{\Delta1},\nonumber\\
\left\Vert (I-P_{1})E_{1}u_{I2}\right\Vert _{a}^{2}  &  \leq\omega\left\Vert
u_{I2}\right\Vert _{a}^{2}\quad\forall u_{I2}\in U_{I2},\nonumber\\
&  \ldots\label{eq:est-omega-MLspaces}\\
\left\Vert (I-P_{1})E_{1}\,\cdots\,(I-P_{L-1})E_{L-1}w_{\Pi L-1}\right\Vert
_{a}^{2}  &  \leq\omega\left\Vert w_{\Pi L-1}\right\Vert _{a}^{2}\quad\forall
w_{\Pi L-1}\in\widetilde{W}_{\Pi L-1},\nonumber
\end{align}
then the Multilevel BDDC preconditioner (Algorithm \ref{alg:multilevel-bddc})
satisfies $\kappa\leq\omega.$
\end{lemma}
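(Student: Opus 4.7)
The plan is to reduce this lemma directly to Corollary~\ref{cor:multispace-bddc} by way of Lemma~\ref{lem:bddc-ML-spaces}. Lemma~\ref{lem:bddc-ML-spaces} has already done the heavy lifting of rewriting Algorithm~\ref{alg:multilevel-bddc} as the abstract Multispace BDDC with $M = 2L-1$ and the explicit list of spaces $V_k$ in (\ref{eq:ML-spaces}) and operators $Q_k$ in (\ref{eq:ML-operators}), and has verified the hypotheses of Corollary~\ref{cor:multispace-bddc} (energy orthogonality of the $V_k$, the $Q_k$ being projections, and the decomposition of unity). So the bound
\[
\kappa \;\leq\; \max_{k=1,\ldots,2L-1}\;\sup_{v_k \in V_k}\frac{\|Q_k v_k\|_a^2}{\|v_k\|_a^2}
\]
is immediately available, and all I have to do is show that each of the $2L-1$ suprema on the right is bounded above by $\omega$.

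The verification proceeds case by case. For $k=1$ one has $V_1 = U_{I1}$ and $Q_1 = I$, so the ratio is identically $1$, which is $\leq \omega$ by the hypothesis $\omega \geq 1$. For each $i=1,\ldots,L-1$ there are then two even/odd indices to handle: $k=2i$ with $V_{2i} = (I-P_i)\widetilde{W}_{\Delta i}$ and $Q_{2i} = (I-P_1)E_1\,\cdots\,(I-P_i)E_i$, and $k=2i+1$ with $V_{2i+1} = U_{I,i+1}$ and the same $Q$ (except that at the top we replace $V_{2L-1} = \widetilde{W}_{\Pi L-1}$ with its full product of projections). In every one of these cases, the supremum of $\|Q_k v_k\|_a^2/\|v_k\|_a^2$ is exactly one of the ratios whose bound by $\omega$ is postulated in (\ref{eq:est-omega-MLspaces}). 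Taking the maximum over $k$ then yields $\kappa \leq \omega$.

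There is really no mathematical obstacle here; the only thing to do carefully is keep the bookkeeping straight between the ``interior'' subspaces $U_{Ii}$ (odd indices) and the ``dual'' subspaces $(I-P_i)\widetilde{W}_{\Delta i}$ (even indices), and verify that each chain of projections $(I-P_1)E_1 \cdots (I-P_i)E_i$ appearing in $Q_{2i}$ and $Q_{2i+1}$ matches the corresponding chain used in the hypothesis. Once those correspondences are laid out, the lemma is essentially a transcription of Corollary~\ref{cor:multispace-bddc} into the multilevel notation, with the sole additional observation that the $k=1$ case is covered by $\omega \geq 1$ rather than by a nontrivial inequality.
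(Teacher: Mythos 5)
Your proposal is correct and follows essentially the same route as the paper: both invoke Lemma~\ref{lem:bddc-ML-spaces} to identify the Multilevel BDDC with the abstract Multispace BDDC on the spaces (\ref{eq:ML-spaces})--(\ref{eq:ML-operators}) and then apply Corollary~\ref{cor:multispace-bddc}, matching each supremum in (\ref{eq:multispace-bddc-bound}) with the corresponding inequality in (\ref{eq:est-omega-MLspaces}). The only difference is that you spell out the index bookkeeping and the trivial $k=1$ case (handled by $\omega\geq 1$), which the paper leaves implicit.
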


\begin{proof}
Choose the spaces and operators as in (\ref{eq:ML-spaces}%
)-(\ref{eq:ML-operators}) so that $u_{I1}=v_{1}\in V_{1}=U_{I1}$, $w_{\Delta
1}=v_{2}\in V_{2}=\left(  I-P_{1}\right)  \widetilde{W}_{\Delta1}$, $\ldots$,
$w_{\Pi L-1}=v_{2L-1}\in V_{2L-1}=\widetilde{W}_{\Pi L-1}$. The bound now
follows from Corollary~\ref{cor:multispace-bddc}.
\end{proof}

\begin{lemma}
\label{lem:bddc-ML-estimate}If for some $\omega_{i}\geq1$,
\begin{equation}
\left\Vert (I-P_{i})E_{i}w_{i}\right\Vert _{a}^{2}\leq\omega_{i}\left\Vert
w_{i}\right\Vert _{a}^{2},\quad\forall w_{i}\in\widetilde{W}_{i},\quad
i=1,\ldots,L-1, \label{eq:est-omega-k}%
\end{equation}
then the Multilevel BDDC preconditioner (Algorithm~\ref{alg:multilevel-bddc})
satisfies $\kappa\leq%
{\textstyle\prod_{i=1}^{L-1}}
\omega_{i}.$
\end{lemma}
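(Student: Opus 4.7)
The plan is to reduce this statement to the already-proved Lemma~\ref{lem:bddc-ML-estimate-MLspaces} by taking $\omega=\prod_{i=1}^{L-1}\omega_i$ and verifying each of the compound inequalities in (\ref{eq:est-omega-MLspaces}) by peeling off one factor $(I-P_j)E_j$ at a time and invoking the per-level hypothesis (\ref{eq:est-omega-k}).

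The key structural observation is that $(I-P_j)E_j$ maps $\widetilde{W}_j$ into $\widehat{W}_j\subset U_j$, and by construction $U_j=\widetilde{W}_{\Pi j-1}\subset\widetilde{W}_{j-1}$. Hence if $w\in\widetilde{W}_j$ then $(I-P_j)E_j w$ is again an admissible argument for (\ref{eq:est-omega-k}) applied at level $j-1$. From this I would prove by downward induction on $i$ that, for every $1\le i\le j\le L-1$ and every $w\in\widetilde{W}_j$,
\[
\left\Vert (I-P_i)E_i\cdots(I-P_j)E_j\,w\right\Vert _a^2\leq\left(\prod_{\ell=i}^{j}\omega_\ell\right)\left\Vert w\right\Vert _a^2.
\]
The base case $i=j$ is exactly (\ref{eq:est-omega-k}); the inductive step applies (\ref{eq:est-omega-k}) at level $i-1$ to the argument $(I-P_i)E_i\cdots(I-P_j)E_j w$, which lies in $U_i\subset\widetilde{W}_{i-1}$ by the observation above.

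With this telescoping bound in hand, I would check each hypothesis of Lemma~\ref{lem:bddc-ML-estimate-MLspaces} in turn. For $V_1=U_{I1}$ there is nothing to prove since $Q_1=I$ and $\omega_i\geq 1$. For $V_{2i-1}=U_{Ii}$ one notes $U_{Ii}\subset U_i\subset\widetilde{W}_{i-1}$ and applies the telescoping bound with the chain running from level $1$ up to level $i-1$. For $V_{2i}=(I-P_i)\widetilde{W}_{\Delta i}\subset\widetilde{W}_i$ and for $V_{2L-1}=\widetilde{W}_{\Pi L-1}\subset\widetilde{W}_{L-1}$, the chain runs from level $1$ up to level $i$ or $L-1$ respectively. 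In every case the resulting factor $\prod_{\ell=1}^{j}\omega_\ell$ is bounded by $\prod_{\ell=1}^{L-1}\omega_\ell$ using $\omega_\ell\geq 1$, so all hypotheses of Lemma~\ref{lem:bddc-ML-estimate-MLspaces} hold with the common constant $\omega=\prod_{i=1}^{L-1}\omega_i$, and the conclusion $\kappa\leq\prod_{i=1}^{L-1}\omega_i$ follows.

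The main (though modest) obstacle is keeping the nested inclusions of the hierarchy in Figure~\ref{fig:multi-bddc} straight, in particular the identification $U_j=\widetilde{W}_{\Pi j-1}$ that allows the output of one level's averaging operator to re-enter the hypothesis at the coarser level. Everything else is a clean induction plus the assumption $\omega_i\geq 1$ used to unify the various partial products into the single global product.
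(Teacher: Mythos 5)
Your proposal is correct and follows essentially the same route as the paper: reduce to Lemma~\ref{lem:bddc-ML-estimate-MLspaces} by noting that each space $V_k$ sits inside some $\widetilde{W}_j$, that $(I-P_j)E_j$ maps $\widetilde{W}_j$ into $U_j=\widetilde{W}_{\Pi j-1}\subset\widetilde{W}_{j-1}$ so the per-level bounds chain, and that $\omega_\ell\geq 1$ lets all partial products be absorbed into $\omega=\prod_{i=1}^{L-1}\omega_i$. The paper's own proof is a one-line statement of the relevant inclusions with the telescoping induction left implicit; you have simply written that induction out explicitly.
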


\begin{proof}
Note from Lemma~\ref{lem:bddc-ML-estimate-MLspaces} that $\left(
I-P_{1}\right)  \widetilde{W}_{\Delta1}\subset\widetilde{W}_{\Delta1}%
\subset\widetilde{W}_{1}$, $U_{I2}\subset\widetilde{W}_{\Pi1}\subset
\widetilde{W}_{1}$, and generally $\left(  I-P_{i}\right)  \widetilde
{W}_{\Delta i}\subset\widetilde{W}_{\Delta i}\subset\widetilde{W}_{i}$,
$U_{Ii+1}\subset\widetilde{W}_{\Pi i}\subset\widetilde{W}_{i}.$
\end{proof}

\section{Condition Number Bound for the Model Problem}

\label{sec:multilevel-condition}

Let $\left\vert w\right\vert _{a(\Omega_{i}^{s})}$ be the energy norm of a
function $w\in\widetilde{W}_{\Pi i},$\ $i=1,\ldots,L-1,$\ restricted to
subdomain $\Omega_{i}^{s},$ $s=1,\ldots N_{i}$, i.e., $\left\vert w\right\vert
_{a(\Omega_{i}^{s})}^{2}=\int_{\Omega_{i}^{s}}\nabla w\nabla w\,dx,$ and let
$\left\Vert w\right\Vert _{a}$\ be the norm obtained by piecewise integration
over each $\Omega_{i}^{s}$. To apply Lemma~\ref{lem:bddc-ML-estimate} to the
model problem presented in Section~\ref{sec:one-level}, we need to generalize
the estimate from Theorem~\ref{thm:element-bound} to coarse levels. To this
end, let $I_{i+1}:\widetilde{W}_{\Pi i}\rightarrow\widetilde{U}_{i+1}$ be an
interpolation from the level$~i$ coarse degrees of freedom (i.e., level $i+1$
degrees of freedom) to functions in another space $\widetilde{U}_{i+1}$ and
assume that, for all $i=1,\ldots,L-1,$ and $s=1,\ldots,N_{i},$ the
interpolation satisfies for all $w\in\widetilde{W}_{\Pi i}$ and for all
$\Omega_{i+1}^{s}$ the equivalence
\begin{equation}
c_{i,1}\left\vert I_{i+1}w\right\vert _{a(\Omega_{i+1}^{s})}^{2}\leq\left\vert
I_{i}w\right\vert _{a(\Omega_{i+1}^{s})}^{2}\leq c_{i,2}\left\vert
I_{i+1}w\right\vert _{a(\Omega_{i+1}^{s})}^{2}, \label{eq:Ii-equiv}%
\end{equation}
which implies by Lemma~\ref{lem:equivalence} also the equivalence%
\begin{equation}
c_{i,1}\left\vert I_{i+1}w\right\vert _{H^{1/2}(\partial\Omega_{i+1}^{s})}%
^{2}\leq\left\vert I_{i}w\right\vert _{H^{1/2}(\partial\Omega_{i+1}^{s})}%
^{2}\leq c_{i,2}\left\vert I_{i+1}w\right\vert _{H^{1/2}(\partial\Omega
_{i+1}^{s})}^{2}, \label{eq:Ii-equiv-seminorm}%
\end{equation}
with $c_{i,2}/c_{i,1}\leq\operatorname*{const}$ bounded independently of
$H_{0},\ldots,H_{i+1}$.

\begin{remark}
Since $I_{1}=I$, the two norms are the same on $\widetilde{W}_{\Pi
0}=\widetilde{U}_{1}=U_{1}.$
\end{remark}

For the three-level BDDC in two dimensions, the result of Tu~\cite[Lemma
4.2]{Tu-2007-TBT}, which is based on the lower bound estimates by Brenner and
Sung~\cite{Brenner-2000-LBN}, can be written in our settings\ for all
$w\in\widetilde{W}_{\Pi1}$ and for all $\Omega_{2}^{s}$\ as%
\begin{equation}
c_{1,1}\left\vert I_{2}w\right\vert _{a(\Omega_{2}^{s})}^{2}\leq\left\vert
w\right\vert _{a(\Omega_{2}^{s})}^{2}\leq c_{1,2}\left\vert I_{2}w\right\vert
_{a(\Omega_{2}^{s})}^{2}, \label{eq:one-equiv-2D}%
\end{equation}
where $I_{2}$ is a piecewise (bi)linear interpolation given by values at
corners of level~$1$ substructures, and $c_{1,2}/c_{1,1}\leq
\operatorname*{const}$ independently of $H/h$.

For the three-level BDDC in three dimensions, the result of Tu~\cite[Lemma
4.5]{Tu-2007-TBT3D}, which is based on the lower bound estimates by Brenner
and He~\cite{Brenner-2003-LBT}, can be written in our settings\ for all
$w\in\widetilde{W}_{\Pi1}$ and for all $\Omega_{2}^{s}$\ as
\begin{equation}
c_{1,1}\left\vert I_{2}w\right\vert _{H^{1/2}(\partial\Omega_{2}^{s})}^{2}%
\leq\left\vert w\right\vert _{H^{1/2}(\partial\Omega_{2}^{s})}^{2}\leq
c_{1,2}\left\vert I_{2}w\right\vert _{H^{1/2}(\partial\Omega_{2}^{s})}^{2},
\label{eq:one-equiv-3D}%
\end{equation}
where $I_{2}$ is an interpolation from the coarse degrees of freedom given by
the averages over substructure edges, and $c_{1,2}/c_{1,1}\leq
\operatorname*{const}$ independently of $H/h$.

We note that the level~$2$ substructures are called subregions
in~\cite{Tu-2007-TBT,Tu-2007-TBT3D}. Since $I_{1}=I$, with $i=1$ the
equivalence (\ref{eq:one-equiv-2D})\ corresponds to (\ref{eq:Ii-equiv}), and
(\ref{eq:one-equiv-3D}) to (\ref{eq:Ii-equiv-seminorm}).

The next Lemma establishes the equivalence of seminorms on a factor space from
the equivalence of norms on the original space. Let $V\subset U$ be finite
dimensional spaces and $\left\Vert \cdot\right\Vert _{A}\ $a norm on $U$ and
define
\begin{equation}
\left\vert u\right\vert _{U/V,A}=\min\limits_{v\in V}\left\Vert u-v\right\Vert
_{A}. \label{eq:factor-norm}%
\end{equation}

We will be using (\ref{eq:factor-norm}) for the norm on the space of discrete
harmonic functions $(I-P_{i})\widetilde{W}_{i}$ with $V$ as the space of
interior functions $U_{Ii}$, and also with $V$ as the space $\widetilde
{W}_{\Delta i}$. In particular, since $\widetilde{W}_{\Pi i}\subset
(I-P_{i})\widetilde{W}_{i}$,\ we have%
\begin{equation}
w\in\widetilde{W}_{\Pi i},\quad\left\Vert w\right\Vert _{a}=\min_{w_{\Delta
}\in\widetilde{W}_{\Delta i}}\left\Vert w-w_{\Delta}\right\Vert _{a}
\label{eq:i-factor}%
\end{equation}

\begin{lemma}
\label{lem:equivalence}Let $\left\Vert \cdot\right\Vert _{A}$, $\left\Vert
\cdot\right\Vert _{B}$ be norms on $U$, and
\begin{equation}
c_{1}\left\Vert u\right\Vert _{B}^{2}\leq\left\Vert u\right\Vert _{A}^{2}\leq
c_{2}\left\Vert u\right\Vert _{B}^{2},\quad\forall u\in U.
\label{eq:equiv-norms}%
\end{equation}
Then for any subspace $V\subset U$,%
\[
c_{1}\left\vert u\right\vert _{U/V,B}^{2}\leq\left\vert u\right\vert
_{U/V,A}^{2}\leq c_{2}\left\vert u\right\vert _{U/V,B}^{2},
\]
resp.,
\begin{equation}
c_{1}\min_{v\in V}\left\Vert u-v\right\Vert _{B}^{2}\leq\min_{v\in
V}\left\Vert u-v\right\Vert _{A}^{2}\leq c_{2}\min_{v\in V}\left\Vert
u-v\right\Vert _{B}^{2}. \label{eq:equiv-factor}%
\end{equation}

\end{lemma}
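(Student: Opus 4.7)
The plan is to use the pointwise norm equivalence (\ref{eq:equiv-norms}) at a single well-chosen witness vector rather than trying to relate the two minimization problems directly. The key observation is that in the definition $|u|_{U/V,A}^2 = \min_{v\in V}\|u-v\|_A^2$, plugging in any particular $v \in V$ gives an upper bound on the minimum; we can therefore cheaply upper-bound $|u|_{U/V,A}^2$ by using a minimizer from the \emph{other} norm as a suboptimal test vector, and then collapse back to the other factor norm via (\ref{eq:equiv-norms}).

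Concretely, for the right-hand inequality in (\ref{eq:equiv-factor}), I would let $v_B^\star \in V$ attain $\min_{v \in V}\|u-v\|_B^2$ (which exists because $V$ is finite dimensional as a subspace of the finite dimensional $U$, so the continuous function $v \mapsto \|u-v\|_B^2$ attains its minimum on $V$). Then by the definition of the factor seminorm and (\ref{eq:equiv-norms}) applied to the single vector $u - v_B^\star \in U$,
\[
|u|_{U/V,A}^2 \;=\; \min_{v\in V}\|u-v\|_A^2 \;\leq\; \|u-v_B^\star\|_A^2 \;\leq\; c_2\|u-v_B^\star\|_B^2 \;=\; c_2\,|u|_{U/V,B}^2.
\]
For the left-hand inequality I would symmetrically pick $v_A^\star \in V$ attaining the $A$-minimum and estimate
\[
c_1\,|u|_{U/V,B}^2 \;=\; c_1\min_{v\in V}\|u-v\|_B^2 \;\leq\; c_1\|u-v_A^\star\|_B^2 \;\leq\; \|u-v_A^\star\|_A^2 \;=\; |u|_{U/V,A}^2,
\]
again using (\ref{eq:equiv-norms}) at the single vector $u - v_A^\star$.

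There is no real obstacle here — the argument is entirely a two-line witness trick, and the only technical care needed is to ensure the minimizers exist, which is immediate from finite dimensionality. (If one wanted to dispense with the finite-dimensional hypothesis, one could replace $v_B^\star$ and $v_A^\star$ by sequences of near-minimizers and take $\liminf$, but that complication is unnecessary in the present setting.) The first equivalence (\ref{eq:factor-norm}) is exactly the form required to apply this lemma to deduce (\ref{eq:Ii-equiv-seminorm}) from (\ref{eq:Ii-equiv}), using the identification (\ref{eq:i-factor}) of the energy norm on $\widetilde{W}_{\Pi i}$ as a factor norm over $\widetilde{W}_{\Delta i}$.
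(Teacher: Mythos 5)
Your proof is correct and is essentially the same as the paper's: both use the minimizer in one norm as a suboptimal test vector for the minimization in the other norm and then apply the pointwise equivalence (\ref{eq:equiv-norms}), the only cosmetic difference being that the paper obtains the left-hand inequality by "switching the notation" rather than writing out the symmetric estimate as you do.
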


\begin{proof}
From the definition (\ref{eq:factor-norm}) of the norm on a factor space, we
get
\[
\left\vert u\right\vert _{U/V,A}=\min\limits_{v\in V}\left\Vert u-v\right\Vert
_{A}=\left\Vert u-v_{A}\right\Vert _{A}.
\]
for some $v_{A}$. Let $v_{B}$ be defined similarly. Then
\begin{align*}
\left\vert u\right\vert _{U/V,A}^{2}  &  =\min\limits_{v\in V}\left\Vert
u-v\right\Vert _{A}^{2}=\left\Vert u-v_{A}\right\Vert _{A}^{2}\leq\left\Vert
u-v_{B}\right\Vert _{A}^{2}\leq\\
&  \leq c_{2}\left\Vert u-v_{B}\right\Vert _{B}^{2}=c_{2}\min\limits_{v\in
V}\left\Vert u-v\right\Vert _{B}^{2}=c_{2}\left\vert u\right\vert _{U/V,B}%
^{2},
\end{align*}
which is the right hand side inequality in (\ref{eq:equiv-factor}). The left
hand side inequality follows by switching the notation for $\left\Vert
\cdot\right\Vert _{A}$ and $\left\Vert \cdot\right\Vert _{B}$.
\end{proof}

\begin{lemma}
\label{lem:W_i norm equiv}For all $i=0,\ldots,L-1$, and $s=1,\ldots,N_{i}$,
\begin{equation}
c_{i,1}\left\vert I_{i+1}w\right\vert _{a(\Omega_{i+1}^{s})}^{2}\leq\left\vert
w\right\vert _{a(\Omega_{i+1}^{s})}^{2}\leq c_{k,2}\left\vert I_{i+1}%
w\right\vert _{a(\Omega_{i+1}^{s})}^{2},\quad\forall w\in\widetilde{W}_{\Pi
i},\;\forall\,\Omega_{i+1}^{s}, \label{eq:ML-equiv}%
\end{equation}
with $c_{i,2}/c_{i,1}\leq C_{i}$, independently of $H_{0}$,\ldots, $H_{i+1}$.
\end{lemma}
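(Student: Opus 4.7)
I would prove this by induction on $i$, leveraging the fact that each level $i+1$ substructure $\Omega_{i+1}^{s}$ is a union of level $i+1$ elements, which are precisely level $i$ substructures $\Omega_{i}^{t}$. The base case $i=0$ is immediate: by the remark preceding the lemma, $I_{1}=I$, so both inequalities in (\ref{eq:ML-equiv}) collapse to an equality and one may take $c_{0,1}=c_{0,2}=1$.

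For the inductive step I would assume (\ref{eq:ML-equiv}) at level $i-1$, i.e., for every $w'\in\widetilde{W}_{\Pi,i-1}$ and every $\Omega_{i}^{t}$,
\[
c_{i-1,1}\,|I_{i}w'|^{2}_{a(\Omega_{i}^{t})} \leq |w'|^{2}_{a(\Omega_{i}^{t})} \leq c_{i-1,2}\,|I_{i}w'|^{2}_{a(\Omega_{i}^{t})}.
\]
Given $w\in\widetilde{W}_{\Pi i}$, I would view $w$ as an element of $\widetilde{W}_{\Pi,i-1}$ through the hierarchical shape function construction: $w$ is determined by its level $i$ coarse (equivalently, level $i+1$) degrees of freedom, and each level $i$ shape function is an energy-minimizing extension on each level $i$ element built from level $i-1$ shape functions with continuous level $i$ dofs; so, expanded down to the level $i-1$ mesh, $w$ has continuous level $i$ dofs across level $i-1$ substructures and is $a$-minimal on each of them. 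Since $\Omega_{i+1}^{s}=\bigcup\{\Omega_{i}^{t}:\Omega_{i}^{t}\subset\Omega_{i+1}^{s}\}$ and the piecewise $a$-energy is additive, summing the inductive hypothesis over the $\Omega_{i}^{t}$ contained in $\Omega_{i+1}^{s}$ yields
\[
c_{i-1,1}\,|I_{i}w|^{2}_{a(\Omega_{i+1}^{s})} \leq |w|^{2}_{a(\Omega_{i+1}^{s})} \leq c_{i-1,2}\,|I_{i}w|^{2}_{a(\Omega_{i+1}^{s})}.
\]
Combining this with the assumed equivalence (\ref{eq:Ii-equiv}) between $|I_{i}w|^{2}$ and $|I_{i+1}w|^{2}$ on $\Omega_{i+1}^{s}$ produces (\ref{eq:ML-equiv}) at level $i$ with constants $c_{i-1,1}c_{i,1}$ and $c_{i-1,2}c_{i,2}$. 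Since every one-step ratio is bounded independently of mesh sizes, the telescoped ratio after $i$ steps, $C_{i}=\prod_{j\leq i}(c_{j,2}/c_{j,1})$, is still bounded independently of $H_{0},\ldots,H_{i+1}$, which is the claimed uniformity in the constants.

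The main obstacle I foresee is making rigorous the embedding $\widetilde{W}_{\Pi i}\hookrightarrow\widetilde{W}_{\Pi,i-1}$ used in the inductive step, which also ensures that $I_{i}w$ is well defined for $w\in\widetilde{W}_{\Pi i}$. The paper's informal description of how the multilevel shape functions are chained has to be turned into a precise statement that, viewed on the level $i-1$ mesh, any $w\in\widetilde{W}_{\Pi i}$ has its level $i$ coarse dofs continuous across adjacent level $i-1$ substructures and is $a$-energy minimal on each of them, so that it indeed lies in $\widetilde{W}_{\Pi,i-1}=U_{i}$. Once this identification is secured, the rest is a clean telescoping of the assumption (\ref{eq:Ii-equiv}) across levels.
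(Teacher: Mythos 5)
Your proposal is correct in substance and follows the same induction-on-levels skeleton as the paper, but it handles the inductive step by a genuinely different mechanism. The paper never applies the induction hypothesis to $w$ itself: it first writes $\left\vert w\right\vert _{a}$ on each coarse subdomain as the minimum $\min_{w_{\Delta}\in\widetilde{W}_{\Delta,i+1}}\left\vert w-w_{\Delta}\right\vert _{a}$ coming from the energy minimality of $\widetilde{W}_{\Pi,i+1}$ (eq.~(\ref{eq:def-wa})), pushes the induction hypothesis through this minimum via the factor-seminorm equivalence of Lemma~\ref{lem:equivalence} (eq.~(\ref{eq:equiv-factor})), and only then invokes (\ref{eq:Ii-equiv}). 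You instead apply the induction hypothesis directly to $w$, level-$i$ substructure by level-$i$ substructure, and sum the local estimates using additivity of the broken energy; Lemma~\ref{lem:equivalence} is not needed at all in your route, which is arguably more elementary. Both arguments ultimately rest on the same structural fact about the hierarchical shape functions, and you correctly single it out as the point requiring rigor. One caution: the global embedding $\widetilde{W}_{\Pi i}\subset\widetilde{W}_{\Pi,i-1}$ that you assert is not literally true, because the non-coarse level-$i$ interface degrees of freedom of a function in $\widetilde{W}_{\Pi i}$ may jump across $\Gamma_{i}$, so such a function need not be continuous, i.e., need not lie in $U_{i}=\widetilde{W}_{\Pi,i-1}$. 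What is true, and what your argument actually uses, is the local statement: the restriction of $w$ to any single $\Omega_{i}^{t}$ agrees with the restriction of some element of $\widetilde{W}_{\Pi,i-1}$ (take the level-$i$ degrees of freedom of $w$ on $\overline{\Omega_{i}^{t}}$, extend them to a single-valued global set, and form the minimal-energy extension, which is local to each level-$(i-1)$ substructure). Since the induction hypothesis is a per-substructure estimate and $I_{i}$ depends only on the level-$i$ degrees of freedom, this local identification suffices, and your telescoping of the constants then reproduces the paper's recursion $C_{i}=C_{i-1}\,c_{2}/c_{1}$.
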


\begin{proof}
The proof follows by induction. For $i=0$, (\ref{eq:ML-equiv}) holds because
$I_{1}=I$. Suppose that (\ref{eq:ML-equiv}) holds for some $i<L-2$ and let
$w\in\widetilde{W}_{\Pi i+1}$. From the definition of $\widetilde{W}_{\Pi
i+1}$ by energy minimization,%
\begin{equation}
\left\vert w\right\vert _{a(\Omega_{i+1}^{s})}=\min_{w_{\Delta}\in
\widetilde{W}_{\Delta i+1}}\left\vert w-w_{\Delta}\right\vert _{a(\Omega
_{i+1}^{s})}. \label{eq:def-wa}%
\end{equation}
From (\ref{eq:def-wa}), the induction assumption, and
Lemma~\ref{lem:equivalence} eq.~(\ref{eq:equiv-factor}), it follows that%
\begin{align}
\lefteqn{c_{i,1}\min_{w_{\Delta}\in\widetilde{W}_{\Delta i+1}}\left\vert
I_{i+1}w-I_{i+1}w_{\Delta}\right\vert _{a(\Omega_{i+2}^{s})}^{2}%
}\label{eq:Ii1w}\\
&  \qquad\leq\min_{w_{\Delta}\in\widetilde{W}_{\Delta i+1}}\left\vert
w-w_{\Delta}\right\vert _{a(\Omega_{i+2}^{s})}^{2}\leq c_{i,2}\min_{w_{\Delta
}\in\widetilde{W}_{\Delta i+1}}\left\vert I_{i+1}w-I_{i+1}w_{\Delta
}\right\vert _{a(\Omega_{i+2}^{s})}^{2}\nonumber
\end{align}
From the assumption (\ref{eq:Ii-equiv}), applied to the functions of the form
$I_{i+1}w$ on $\Omega_{i+2}^{s}$,
\begin{equation}
c_{1}\left\vert I_{i+2}w\right\vert _{a(\Omega_{i+2}^{s})}^{2}\leq
\min_{w_{\Delta}\in\widetilde{W}_{\Delta i+1}}\left\vert I_{i+1}%
w-I_{i+1}w_{\Delta}\right\vert _{a(\Omega_{i+2}^{s})}^{2}\leq c_{2}\left\vert
I_{i+2}w\right\vert _{a(\Omega_{i+2}^{s})}^{2} \label{eq:Tuk}%
\end{equation}
with $c_{2}/c_{1}$, bounded independently of $H_{0},\ldots,H_{i+1}$. Then
(\ref{eq:def-wa}), (\ref{eq:Ii1w})\ and (\ref{eq:Tuk}) imply
(\ref{eq:ML-equiv}) with $C_{i}=C_{i-1}c_{2}/c_{1}$.
\end{proof}

Next, we generalize the estimate from Theorem~\ref{thm:element-bound} to
coarse levels.

\begin{lemma}
\label{lem:W_i operator equiv}For all substructuring levels $i=1,\ldots,L-1$,
\begin{equation}
\left\Vert (I-P_{i})E_{i}w_{i}\right\Vert _{a}^{2}\leq C_{i}\left(
1+\log\frac{H_{i}}{H_{i-1}}\right)  ^{2}\left\Vert w_{i}\right\Vert _{a}%
^{2},\quad\forall w_{i}\in U_{i}. \label{eq:ML-bound}%
\end{equation}

\end{lemma}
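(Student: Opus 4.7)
My plan is to reduce the level--$i$ bound to the already-known two-level estimate of Theorem~\ref{thm:element-bound} by transferring the problem, via the interpolation $I_i$, into the \emph{standard} finite element space $\widetilde{U}_i$, where level~$i-1$ substructures act as elements of size $H_{i-1}$ and level~$i$ substructures act as substructures of size $H_i$. In that standard setting, Theorem~\ref{thm:element-bound} applies directly and yields the factor $(1+\log(H_i/H_{i-1}))^2$; Lemma~\ref{lem:W_i norm equiv} then lets us pull this bound back to the original energy norm $\|\cdot\|_a$ on $\widetilde{W}_i$, at the price of a multiplicative constant depending only on $c_{i-1,1}/c_{i-1,2}$.

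More concretely, I would proceed as follows. First, I would note that the lemma is of interest only when $w_i\in\widetilde{W}_i$ (for $w_i\in U_i$ the statement is trivial since $E_iw_i=w_i$ and $I-P_i$ is an $a$-orthogonal projection); I will therefore work on $\widetilde{W}_i$. Next, I would construct on $\widetilde{U}_i$ the analogues $\widetilde{E}_i$ and $\widetilde{P}_i$ of the averaging operator and of the energy-orthogonal projection onto interior functions. Since $E_i$ is defined purely by averaging level~$i$ degrees of freedom on $\Gamma_i$, and since $I_i$ matches level~$i$ degrees of freedom by construction, one has $I_i E_i = \widetilde{E}_i I_i$, and the analogous (energy-minimization) characterization of the interior projection, together with Lemma~\ref{lem:equivalence} applied to the quotient $\widetilde{W}_i/U_{Ii}$, gives $\|(I-P_i) v\|_a^2 \asymp \|(I-\widetilde{P}_i) I_i v\|_a^2$ uniformly in $H_0,\ldots,H_i$.

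With these identifications in place, I would apply Theorem~\ref{thm:element-bound} to $\widetilde{U}_i$, treating the level~$i-1$ substructures as the finite elements (mesh size $H_{i-1}$) and the level~$i$ substructures as the BDDC substructures (size $H_i$); this gives
\[
\bigl\|(I-\widetilde{P}_i)\widetilde{E}_i(I_i w_i)\bigr\|_a^2
\leq C\Bigl(1+\log\tfrac{H_i}{H_{i-1}}\Bigr)^2\,\|I_i w_i\|_a^2.
\]
Chaining this with the two-sided equivalences from Lemma~\ref{lem:W_i norm equiv} (applied to both sides, summed over all level~$i$ substructures) yields the claimed inequality with $C_i$ incorporating the cumulative factor $c_{i-1,2}/c_{i-1,1}$, which is bounded independently of $H_0,\ldots,H_i$.

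The main obstacle, and the place to be careful, is the passage from the abstract space $\widetilde{W}_i$ (whose shape functions are built recursively by multilevel energy minimization) to the standard finite element space $\widetilde{U}_i$: one has to check both that the relevant operators $E_i$ and $I-P_i$ correspond under $I_i$ \emph{and} that the norm equivalence from Lemma~\ref{lem:W_i norm equiv}, which was stated for $\widetilde{W}_{\Pi i-1}=U_i$, transfers to the full $\widetilde{W}_i$. The latter is handled by the factor-norm technique of Lemma~\ref{lem:equivalence}: since $\widetilde{W}_i=\widetilde{W}_{\Delta i}\oplus\widetilde{W}_{\Pi i}$ $a$-orthogonally and the pieces living on $\widetilde{W}_{\Delta i}$ are purely local per level~$i-1$ substructure, the equivalence on $U_i=\widetilde{W}_{\Pi i-1}$ controls what is needed on $\widetilde{W}_i$ after subtracting interior degrees of freedom.
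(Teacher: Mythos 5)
Your proposal follows essentially the same route as the paper's proof: transfer the problem via $I_i$ into the comparison space $\widetilde{U}_i$, apply the two-level bound of Theorem~\ref{thm:element-bound} there with level~$i-1$ substructures playing the role of elements of size $H_{i-1}$, and pull the estimate back using the factor-norm equivalence of Lemma~\ref{lem:equivalence} together with the norm equivalence of Lemma~\ref{lem:W_i norm equiv}. The paper phrases the transfer more tersely, writing $\left\Vert I_{i}(I-P_{i})E_{i}w_{i}\right\Vert _{a}^{2}=\min_{u_{Ii}\in U_{Ii}}\left\Vert I_{i}E_{i}w_{i}-I_{i}u_{Ii}\right\Vert _{a}^{2}$ rather than introducing explicit operators $\widetilde{E}_{i}$, $\widetilde{P}_{i}$, but the substance is identical, and your explicit attention to extending the equivalence from $U_{i}$ to $\widetilde{W}_{i}$ is a point the paper passes over more quickly.
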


\begin{proof}
From (\ref{eq:ML-equiv}), summation over substructures on level$~i$ gives%
\begin{equation}
c_{i,1}\left\Vert I_{i}w\right\Vert _{a}^{2}\leq\left\Vert w\right\Vert
_{a}^{2}\leq c_{i,2}\left\Vert I_{i}w\right\Vert _{a}^{2},\quad\forall w\in
U_{i}. \label{eq:equiv-i1}%
\end{equation}
Next, in our context, using the definition of $P_{i}$ and
(\ref{eq:equiv-factor}), we get
\[
\left\Vert I_{i}(I-P_{i})E_{i}w_{i}\right\Vert _{a}^{2}=\min_{u_{Ii}\in
U_{Ii}}\left\Vert I_{i}E_{i}w_{i}-I_{i}u_{Ii}\right\Vert _{a}^{2},
\]
so from (\ref{eq:element-bound}) for some $\overline{C}_{i}$ and all
$i=1,\ldots,L-1$,
\begin{equation}
\min_{u_{Ii}\in U_{Ii}}\left\Vert I_{i}E_{i}w_{i}-I_{i}u_{Ii}\right\Vert
_{a}^{2}\leq\overline{C}_{i}\left(  1+\log\frac{H_{i}}{H_{i-1}}\right)
^{2}\left\Vert I_{i}w_{i}\right\Vert _{a}^{2},\quad\forall w_{i}\in U_{i}.
\label{eq:log-linear}%
\end{equation}
Similarly, from (\ref{eq:equiv-i1})\ and (\ref{eq:log-linear}) it follows that%
\begin{align*}
\left\Vert (I-P_{i})E_{i}w_{i}\right\Vert _{a}^{2}  &  =\min_{u_{Ii}\in
U_{Ii}}\left\Vert E_{i}w_{i}-u_{Ii}\right\Vert _{a}^{2}\\
&  \leq c_{i,2}\min_{u_{Ii}\in U_{Ii}}\left\Vert I_{i}E_{i}w_{i}-I_{i}%
u_{Ii}\right\Vert _{a}^{2}\\
&  \leq c_{i,2}\overline{C}_{i}\left(  1+\log\frac{H_{i}}{H_{i-1}}\right)
^{2}\left\Vert I_{i}w_{i}\right\Vert _{a}^{2}\\
&  \leq\frac{c_{i,2}\overline{C}_{i}}{c_{i,1}}\left(  1+\log\frac{H_{i}%
}{H_{i-1}}\right)  ^{2}\left\Vert w_{i}\right\Vert _{a}^{2},
\end{align*}
which is (\ref{eq:ML-bound}) with%
\[
C_{i}=\frac{c_{i,2}\overline{C}_{i}}{c_{i,1}},
\]
and $c_{i,2}/c_{i,1}$ from Lemma~\ref{lem:W_i norm equiv}.
\end{proof}

\begin{theorem}
\label{thm:ML-bound}The Multilevel BDDC for the model problem and corner
coarse function in 2D and edge coarse functions in 3D satisfies the condition
number estimate%
\[
\kappa\leq%
{\textstyle\prod_{i=1}^{L-1}}
C_{i}\left(  1+\log\frac{H_{i}}{H_{i-1}}\right)  ^{2}.
\]

\end{theorem}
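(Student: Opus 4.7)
The plan is to combine the two preceding lemmas directly; no genuinely new argument is required. Lemma~\ref{lem:W_i operator equiv} supplies, for each substructuring level $i = 1,\ldots, L-1$, a per-level stability bound
\[
\|(I-P_i)E_i w_i\|_a^2 \leq C_i\left(1 + \log \frac{H_i}{H_{i-1}}\right)^2 \|w_i\|_a^2,
\]
with $C_i$ independent of the mesh sizes $H_0,\ldots,H_i$. This is exactly the hypothesis needed in Lemma~\ref{lem:bddc-ML-estimate}, with the choice $\omega_i = C_i(1 + \log H_i/H_{i-1})^2$.

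I would then simply invoke Lemma~\ref{lem:bddc-ML-estimate} to multiply these per-level estimates into the global condition number bound
\[
\kappa \leq \prod_{i=1}^{L-1} \omega_i = \prod_{i=1}^{L-1} C_i\left(1 + \log \frac{H_i}{H_{i-1}}\right)^2,
\]
which is the claim. The one small hypothesis to check is $\omega_i \geq 1$, as required by Lemma~\ref{lem:bddc-ML-estimate}; this can be arranged by enlarging each $C_i$ if needed (taking $\max\{1, C_i\}$), which does not affect the form of the bound.

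I do not expect any real obstacle at this stage: the substantive analytic work is already packaged in Lemma~\ref{lem:W_i operator equiv}, which is where the two-level polylogarithmic bound of Theorem~\ref{thm:element-bound} is lifted to every coarse level by means of the interpolation equivalences (\ref{eq:one-equiv-2D}) and (\ref{eq:one-equiv-3D}), the norm equivalence of Lemma~\ref{lem:W_i norm equiv}, and the factor-norm Lemma~\ref{lem:equivalence}. The multiplicative accumulation across levels is provided by Lemma~\ref{lem:bddc-ML-estimate}, which in turn rests on the identification of Multilevel BDDC as an abstract Multispace BDDC on energy-orthogonal spaces (Lemma~\ref{lem:bddc-ML-spaces}) and on Corollary~\ref{cor:multispace-bddc}. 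Once all of these ingredients are in place, the theorem is a one-line corollary.
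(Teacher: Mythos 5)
Your proposal matches the paper's own proof exactly: Theorem~\ref{thm:ML-bound} is obtained by feeding the per-level bound of Lemma~\ref{lem:W_i operator equiv} into Lemma~\ref{lem:bddc-ML-estimate} with $\omega_{i}=C_{i}\left(1+\log\frac{H_{i}}{H_{i-1}}\right)^{2}$. Your extra remark about ensuring $\omega_{i}\geq 1$ by enlarging $C_{i}$ is a small point the paper leaves implicit, and it is handled correctly.
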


\begin{proof}
The proof follows from Lemmas~\ref{lem:bddc-ML-estimate} and
\ref{lem:W_i operator equiv}, with $\omega_{i}=C_{i}\left(  1+\log\frac{H_{i}%
}{H_{i-1}}\right)  ^{2}$.
\end{proof}

\begin{remark}
For $L=3$ in two and three dimensions we recover the estimates by
Tu~\cite{Tu-2007-TBT,Tu-2007-TBT3D}, respectively.
\end{remark}

\begin{remark}
\label{rem:nonmonotone}While for standard (two-level) BDDC it is immediate
that increasing the coarse space and thus decreasing the space $\widetilde{W}$
cannot increase the condition number bound, this is an open problem for the
multilevel method. In fact, the 3D numerical results in the next section
suggest that this may not be  the case.
\end{remark}

\begin{corollary}
In the case of uniform coarsening, i.e. with $H_{i}/H_{i-1}=H/h$ and the same
geometry of decomposition on all levels $i=1,\ldots L-1,$ we get
\begin{equation}
\kappa\leq C^{L-1}\left(  1+\log H/h\right)  ^{2\left(  L-1\right)  }.
\label{eq:all-same}%
\end{equation}

\end{corollary}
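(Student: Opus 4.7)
The plan is to simply specialize Theorem~\ref{thm:ML-bound} to the uniform setting; the corollary is essentially a bookkeeping statement, so the proof will consist of identifying which quantities in the theorem's bound collapse under the uniformity hypotheses.

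First I would invoke Theorem~\ref{thm:ML-bound}, which already gives
\[
\kappa \leq \prod_{i=1}^{L-1} C_{i}\left(1+\log\frac{H_{i}}{H_{i-1}}\right)^{2}.
\]
The uniform coarsening hypothesis $H_{i}/H_{i-1}=H/h$ lets me replace every logarithmic factor by the single quantity $(1+\log H/h)^{2}$, producing $(1+\log H/h)^{2(L-1)}$ as the logarithmic part of the bound.

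Next I would handle the geometric constants $C_{i}$ coming from Lemma~\ref{lem:W_i operator equiv} (which in turn depends on the equivalence constants of Lemma~\ref{lem:W_i norm equiv} and the elliptic constant in Theorem~\ref{thm:element-bound}). Under the assumption that the geometry of decomposition is the same on every level, the equivalence constants $c_{i,1},c_{i,2}$ in the hypothesis (\ref{eq:Ii-equiv}) and the two-level constant $\overline{C}_{i}$ can all be taken from a single level-independent pool. Consequently a common upper bound $C$ for all $C_{i}$, $i=1,\ldots,L-1$, exists, and the product of $L-1$ such constants is bounded by $C^{L-1}$.

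Combining the two observations immediately yields (\ref{eq:all-same}). The only delicate point worth flagging is the level-to-level accumulation hidden in Lemma~\ref{lem:W_i norm equiv} through the recursion $C_{i}=C_{i-1}\cdot c_{2}/c_{1}$: under the uniform-geometry hypothesis the ratio $c_{2}/c_{1}$ is the same on every level, so $C_{i}$ grows at most geometrically in $i$ with a level-independent base, which is exactly the behavior absorbed into the factor $C^{L-1}$. There is no real obstacle here; the argument is a one-line substitution once these constants are inspected.
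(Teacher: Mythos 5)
Your proof is correct and is exactly what the paper intends: the corollary appears there without any proof, being an immediate specialization of Theorem~\ref{thm:ML-bound} obtained by substituting $H_{i}/H_{i-1}=H/h$ into each logarithmic factor and absorbing the level constants $C_{i}$ into a single $C$. One small caution on your closing remark: the claim that geometric growth of $C_{i}$ in $i$ (via the recursion $C_{i}=C_{i-1}\,c_{2}/c_{1}$ in Lemma~\ref{lem:W_i norm equiv}) is \emph{absorbed} into $C^{L-1}$ is only accurate if one reads $C$ as $\max_{i}C_{i}$ --- legitimate for a fixed number of levels, but then $C$ depends on $L$ --- since if the individual $C_{i}$ truly grow geometrically with a level-independent base, the product $\prod_{i=1}^{L-1}C_{i}$ behaves like that base raised to the power $L(L-1)/2$ rather than $L-1$.
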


\section{Numerical Examples}

\label{sec:numerical-examples}

\begin{table}[t]
\caption{Two dimensional (2D) results. The letters C and E designate the use
of corners and edges, respectively, in the coarse space.}
\begin{center}
{\footnotesize
\begin{tabular}
[c]{|c|c|c|c|c|c|c|}\hline
$L$ & \multicolumn{2}{c|}{C} & \multicolumn{2}{c|}{C+E} & $n$ & $n_{\Gamma}%
$\\\hline
& iter & cond & iter & cond &  & \\\hline
\multicolumn{7}{|c|}{$H_{i}/H_{i-1}=3$ at all levels.}\\\hline
2 & 8 & 1.92 & 5 & 1.08 & 144 & 80\\
3 & 13 & 3.10 & 7 & 1.34 & 1296 & 720\\
4 & 17 & 5.31 & 9 & 1.60 & 11,664 & 6480\\
5 & 23 & 9.22 & 10 & 1.85 & 104,976 & 58,320\\
6 & 31 & 16.07 & 11 & 2.12 & 944,748 & 524,880\\
7 & 42 & 28.02 & 13 & 2.45 & 8,503,056 & 4,723,920\\\hline
\multicolumn{7}{|c|}{$H_{i}/H_{i-1}=4$ at all levels.}\\\hline
2 & 9 & 2.20 & 6 & 1.14 & 256 & 112\\
3 & 15 & 4.02 & 8 & 1.51 & 4096 & 1792\\
4 & 21 & 7.77 & 10 & 1.88 & 65,536 & 28,672\\
5 & 30 & 15.2 & 12 & 2.24 & 1,048,576 & 458,752\\
6 & 42 & 29.7 & 13 & 2.64 & 16,777,216 & 7,340,032\\\hline
\multicolumn{7}{|c|}{$H_{i}/H_{i-1}=8$ at all levels.}\\\hline
2 & 10 & 2.99 & 7 & 1.33 & 1024 & 240\\
3 & 19 & 7.30 & 11 & 2.03 & 65,536 & 15,360\\
4 & 31 & 18.6 & 13 & 2.72 & 4,194,304 & 983,040\\
5 & 50 & 47.38 & 15 & 3.40 & 268,435,456 & 62,914,560\\\hline
\multicolumn{7}{|c|}{$H_{i}/H_{i-1}=12$ at all levels.}\\\hline
2 & 11 & 3.52 & 8 & 1.46 & 2304 & 368\\
3 & 21 & 10.12 & 12 & 2.39 & 331,776 & 52,992\\
4 & 39 & 29.93 & 15 & 3.32 & 47,775,744 & 7,630,848\\\hline
\multicolumn{7}{|c|}{$H_{i}/H_{i-1}=16$ at all levels.}\\\hline
2 & 11 & 3.94 & 8 & 1.56 & 4096 & 496\\
3 & 23 & 12.62 & 13 & 2.67 & 1,048,576 & 126,976\\
4 & 43 & 41.43 & 16 & 3.78 & 268,435,456 & 32,505,856\\\hline
\end{tabular}
}
\label{tab1}
\end{center}
\end{table}

\begin{table}[t]
\caption{Three dimensional (3D) results. The letters C, E, and F designate the
use of corners, edges, and faces, respectively, in the coarse space.}%
\label{tab2}
\begin{center}
{\footnotesize
\begin{tabular}
[c]{|c|c|c|c|c|c|c|c|c|}\hline
$L$ & \multicolumn{2}{c|}{E} & \multicolumn{2}{c|}{C+E} &
\multicolumn{2}{c|}{C+E+F} & $n$ & $n_{\Gamma}$\\\hline
& iter & cond & iter & cond & iter & cond &  & \\\hline
\multicolumn{7}{|c|}{$H_{i}/H_{i-1}=3$ at all levels.} &
\multicolumn{2}{|c|}{}\\\hline
2 & 10 & 1.85 & 8 & 1.47 & 5 & 1.08 & 1728 & 1216\\
3 & 14 & 3.02 & 12 & 2.34 & 8 & 1.50 & 46,656 & 32,832\\
4 & 18 & 4.74 & 18 & 5.21 & 11 & 2.20 & 1,259,712 & 886,464\\
5 & 23 & 7.40 & 26 & 14.0 & 16 & 3.98 & 34,012,224 & 23,934,528\\\hline
\multicolumn{7}{|c|}{$H_{i}/H_{i-1}=4$ at all levels.} &
\multicolumn{2}{|c|}{}\\\hline
2 & 10 & 1.94 & 9 & 1.66 & 6 & 1.16 & 4096 & 2368\\
3 & 15 & 3.51 & 14 & 3.24 & 10 & 1.93 & 262,144 & 151,552\\
4 & 20 & 6.09 & 22 & 9.95 & 14 & 3.05 & 16,777,216 & 9,699,328\\\hline
\multicolumn{7}{|c|}{$H_{i}/H_{i-1}=8$ at all levels.} &
\multicolumn{2}{|c|}{}\\\hline
2 & 12 & 2.37 & 11 & 2.24 & 8 & 1.50 & 32,768 & 10,816\\
3 & 19 & 5.48 & 20 & 7.59 & 14 & 3.32 & 16,777,216 & 5,537,792\\\hline
\multicolumn{7}{|c|}{$H_{i}/H_{i-1}=10$ at all levels.} &
\multicolumn{2}{|c|}{}\\\hline
2 & 12 & 2.56 & 12 & 2.47 & 9 & 1.69 & 64,000 & 17,344\\
3 & 20 & 6.39 & 22 & 10.1 & 16 & 3.85 & 64,000,000 & 17,344,000\\\hline
\end{tabular}
}
\end{center}
\end{table}

\begin{figure}[ptb]
\centering \scalebox{0.70} { \includegraphics{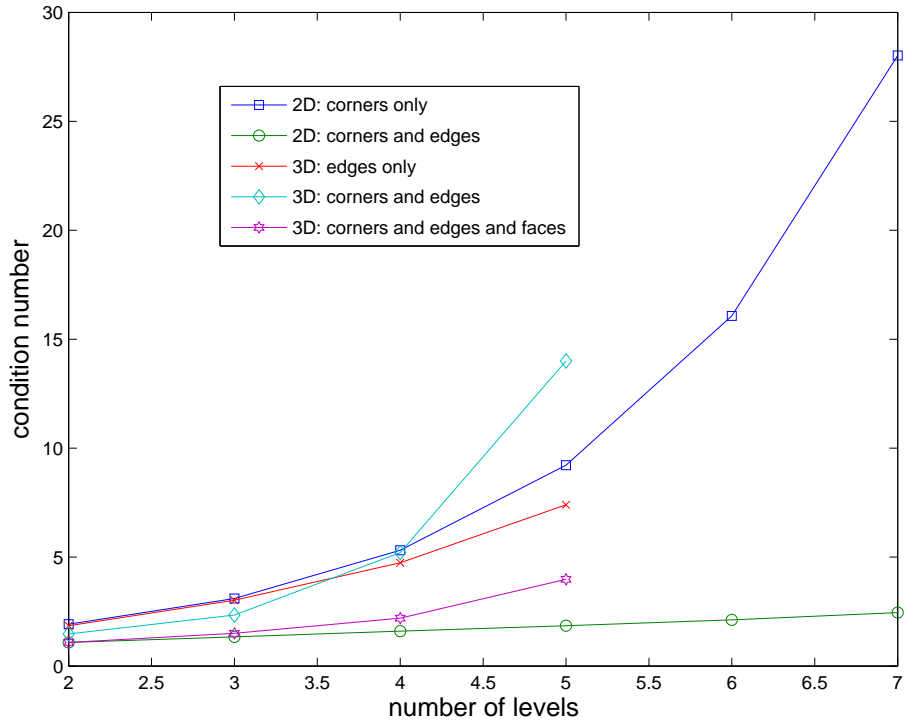} }\caption{Plot of data
in Tables~\ref{tab1} and \ref{tab2} for $H_{i}/H_{i-1}=3$ at all levels.}%
\label{ne:fig1}%
\end{figure}

\begin{figure}[ptb]
\centering \scalebox{0.70} { \includegraphics{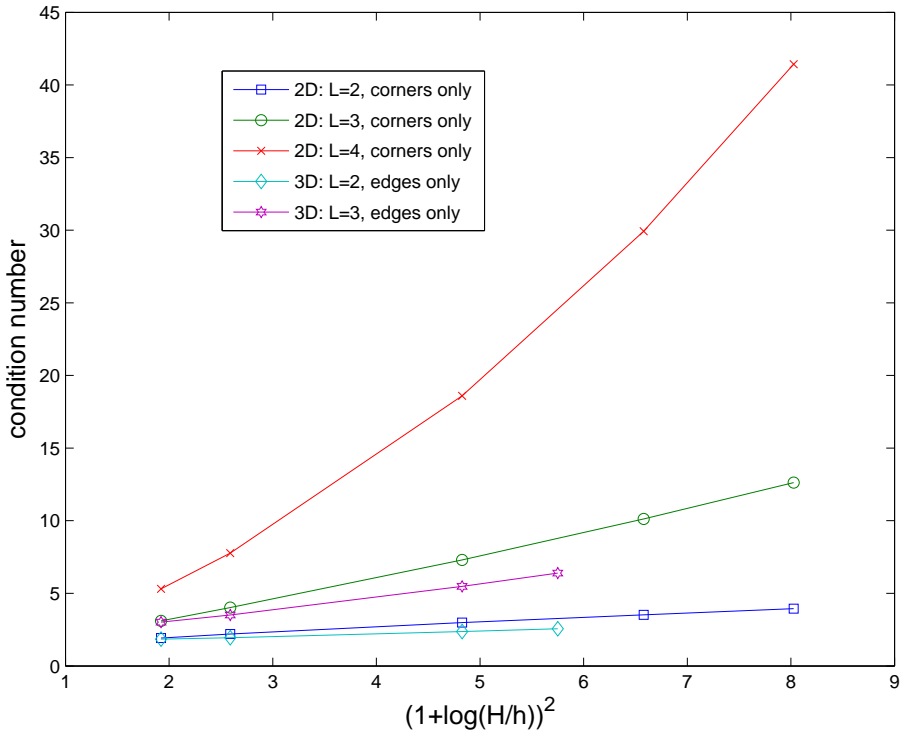} }\caption{Plot of data
in Tables~\ref{tab1} and \ref{tab2} for uniform coarsening.}%
\label{ne:fig2}%
\end{figure}

Numerical examples are presented in this section for the Poisson equation in
two and three dimensions. The problem domain in 2D (3D) is the unit square
(cube), and standard bilinear (trilinear) finite elements are used for the
discretization. The substructures at each level are squares or cubes, and
periodic essential boundary conditions are applied to the boundary of the
domain. This choice of boundary conditions allows us to solve very large
problems on a single processor since all substructure matrices are identical
for a given level.

The preconditioned conjugate gradient algorithm is used to solve the
associated linear systems to a relative residual tolerance of $10^{-8}$ for
random right-hand-sides with zero mean value. The zero mean condition is
required since, for periodic boundary conditions, the null space of the
coefficient matrix is the unit vector. The coarse problem always has $4^{2}$
($4^{3}$) subdomains at the coarsest level for 2D (3D) problems.

The number of levels ($L$), the number of iterations (iter), and condition
number estimates (cond) obtained from the conjugate gradient iterations are
reported in Tables~\ref{tab1} and \ref{tab2}. The letters C, E, and F
designate the use of corners, edges, or faces in the coarse space. For
example, C+E means that both corners and edges are used in the coarse space.
For 2D and 3D problems, the theory is applicable to coarse spaces C and E,
respectively. Also shown in the tables are the total number of unknowns ($n$)
and the number of unknowns ($n_{\Gamma}$) on subdomain boundaries at the
finest level.

The results in Tables~\ref{tab1} and \ref{tab2} are displayed in
Figure~\ref{ne:fig1} for a fixed value of $H_{i}/H_{i-1}=3$. In two dimensions
we observe very different behavior depending on the particular form of the
coarse space. If only corners are used in 2D, then there is very rapid growth
of the condition number with increasing numbers of levels as predicted by the
theory. In contrast, if both corners and edges are used in the 2D coarse
space, then the condition number appears to vary linearly with $L$ for the the
number of levels considered. Our explanation is that a bound similar to
Theorem~\ref{thm:ML-bound} still applies to the favorable 2D case, though
possibly with (much) smaller constants, so the exponential growth of the
condition number is no longer apparent. The results in Tables~\ref{tab1} and
\ref{tab2} are also displayed in Figure~\ref{ne:fig2} for fixed numbers of
levels. The observed growth of condition numbers for the case of uniform
coarsening is consistent with the estimate in (\ref{eq:all-same}).

Similar trends are present in 3D, but the beneficial effects of using more
enriched coarse spaces are much less pronounced. Interestingly, when comparing
the use of edges only (E) with corners and edges (C+E) in the coarse space,
the latter does not always lead to smaller numbers of iterations or condition
numbers for more than two levels. The fully enriched coarse space (C+E+F),
however, does give the best results in terms of iterations and condition
numbers. It should be noted that the present 3D theory in Theorem
\ref{thm:ML-bound} covers only the use of the edges only, and the present
theory does not guarantee that the condition number (or even its
bound)\ decrease with increasing the coarse space (Remark \ref{rem:nonmonotone}%
).

In summary, the numerical examples suggest that better performance, especially
in 2D, can be obtained when using a fully enriched coarse space. Doing so does
not incur a large computational expense since there is never the need to solve
a large coarse problem exactly with the multilevel approach. Finally, we note
that a large number of levels is not required to solve very large problems.
For example, the number of unknowns in 3D for a 4-level method with a
coarsening ratio of $H_{i}/H_{i-1}=10$ at all levels is $(10^{4})^{3} =
10^{12}$.

\bibliographystyle{siam}
\bibliography{../../bibliography/bddc}
\end{document}